\newcommand{\mathsym}[1]{{}}
\newcommand{\thmref}[1]{Theorem~\ref{#1}}
\newcommand{\propref}[1]{Proposition~\ref{#1}}
\newcommand{\lemref}[1]{Lemma~\ref{#1}}
\newcommand{\eqnref}[1]{Equation~(\ref{#1})}
\newcommand{\remref}[1]{Remark~\ref{#1}}
\newcommand{\corref}[1]{Corollary~\ref{#1}}
\newcommand{\figref}[1]{Figure~\ref{#1}}
\newcommand{\conref}[1]{Conjecture~\ref{#1}}
\def\li{L_{i}}
\def\ri{R_{i}}
\def\ddx{{\frac{d}{dx}}}
\newtheorem{theorem}{Theorem}[section]
\newtheorem{corollary}[theorem]{Corollary}
\newtheorem{conjecture}[theorem]{Conjecture}
\newtheorem{lemma}[theorem]{Lemma}
\newtheorem{proposition}[theorem]{Proposition}
\theoremstyle{example}
\newtheorem{remark}[theorem]{Remark}
\theoremstyle{definition}
\theoremstyle{notation}
\newtheorem{example}[theorem]{Example}
\newcommand{\hj}[3]{\hat{j}_{#1}(#2,#3)}
\newcommand{\ga}{\Gamma}
\newcommand{\tg}{\tau(\Gamma)}
\newcommand{\ta}[1]{\tau(#1)}
\newcommand{\ee}[1]{E(#1)}
\newcommand{\vv}[1]{V(#1)}
\newcommand{\va}{\upsilon}
\newcommand{\pp}{p_{i}}
\newcommand{\qq}{q_{i}}
\def\<{\langle }
\def\>{\rangle }
\newcommand{\secref}[1]{\S\ref{#1}}
\def\inf{\text{inf}}
\def\mod{\text{mod}}
\def\tr{\text{tr}}
\def\diag{\text{diag}}
\newcommand{\am}{\mathrm{A}}
\newcommand{\dm}{\mathrm{D}}
\newcommand{\lm}{\mathrm{L}}
\newcommand{\plm}{\mathrm{L^+}}
\newcommand{\plp}{l_{pp}^+}
\newcommand{\plpq}{l_{pq}^+}
\def\elg{\ell (\ga)}
\begin{document}

\title[Families of Metrized Graphs With Small Tau Constants]
{Families of Metrized Graphs With Small Tau Constants}

\author{Zubeyir Cinkir}
\address{Zubeyir Cinkir\\
Zirve University\\
Faculty of Education\\
Department of Mathematics\\
Gaziantep\\
TURKEY.}
\email{zubeyirc@gmail.com}


\keywords{Metrized graph, tau constant, hexagonal net around a torus, tau lower bound conjecture}

\begin{abstract}
Baker and Rumely's tau lower bound conjecture claims that if the tau constant of a metrized graph is divided by its total length, this ratio must be bounded below by a positive constant for all metrized graphs. We construct several families of metrized graphs having small tau constants. In addition to numerical computations, we prove that the tau constants of the metrized graphs in one of these families, the hexagonal nets around a torus, asymptotically approach to $\frac{1}{108}$ which is our conjectural lower bound.
\end{abstract}

\maketitle

\section{Introduction}\label{sec introduction}

The tau constant $\tg$ of a metrized graph $\ga$ is an invariant which plays important roles in both harmonic analysis on metrized graphs \cite{BRh} and arithmetic of curves \cite{C7}. Its properties are studied in the articles \cite{C2} and \cite{C5}. Moreover, its algorithmic computation is given in \cite{C3}. The tau constant is closely related to the other metrized graph invariants studied in \cite{Zh2} and graph invariants such as edge connectivity and Kirchhoff index.

In \cite[Conjecture 14.5]{BRh}, Baker and Rumely posed a conjecture concerning the existence of a universal lower bound for $\tg$. This conjecture can be stated as follows:
\begin{conjecture}\label{conj tau lower bound1}
If $\ell(\ga)=\int_{\ga} dx$ denotes the total length of $\ga$, then we have
$$ \inf_{\ga} \frac{\tg}{\ell(\ga)}>0, $$
taking the infimum over all metrized graphs $\ga$ with $\ell(\ga) \neq 0$.
\end{conjecture}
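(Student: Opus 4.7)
The plan is to try to identify the exact infimum and prove the conjecture in the sharp form suggested by the abstract, namely $\inf_\Gamma \tau(\Gamma)/\ell(\Gamma) = \tfrac{1}{108}$. First I would exploit the scale invariance of the ratio: replacing every edge length by $\lambda$ times itself multiplies both $\tau(\Gamma)$ and $\ell(\Gamma)$ by $\lambda$, so one may normalize to $\ell(\Gamma)=1$ and try to prove $\tau(\Gamma)\ge 1/108$ on the space of unit-length metrized graphs.

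Next I would reduce the class of graphs that need to be considered. Bridges (edges whose removal disconnects $\Gamma$) contribute a large share to $\tau$: a bridge of length $b$ inside $\Gamma$ contributes $b/4$ to the tau constant, which alone yields ratio $\ge 1/4$, far above $1/108$. Using the known additivity/decomposition properties of $\tau$ under bridge edges and pendant vertices (from \cite{C2, C5}), the problem reduces to bridgeless graphs. Among bridgeless graphs, one expects by a direct variational argument that extremizers should be vertex-transitive and $3$-regular: higher valences force more edges incident at a vertex and tend to increase the integrated squared resistance derivative that defines $\tau$. This identifies the hexagonal nets on the torus as the natural candidate extremizers, matching the $1/108$ asymptotic the paper establishes.

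The core of the proof would then be a uniform lower bound on bridgeless trivalent metrized graphs. My approach is spectral: write $\tau(\Gamma)$ in terms of the pseudo-inverse $L^+$ of the weighted Laplacian of the underlying combinatorial graph, using the formulas that express $\tau$ as a quadratic form in the resistance function $r(x,y)$. One then seeks an inequality of the form
\[
\tau(\Gamma)\ \ge\ \frac{1}{108}\,\ell(\Gamma) - \Phi(\Gamma),
\]
where $\Phi(\Gamma)\ge 0$ is an explicit topological ``defect'' term that vanishes precisely on the hexagonal tori. A natural route is to combine local edge-by-edge estimates (bounding each edge's contribution by a function of the effective resistance across its endpoints) with global identities tying these resistances to the genus, using Foster-type relations $\sum_e r(e)/\ell_e = g(\Gamma)$.

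The hardest step, and the one where I expect the argument to get stuck, is controlling the high-genus limit. For bounded genus $g$, the unit-length moduli space is compact and $\tau/\ell$ is continuous, so a minimum is attained and a finite-dimensional Lagrange-multiplier analysis can in principle verify the bound. But as $g\to\infty$ the dimension of the moduli space blows up, and the hexagonal net, precisely the sharp case, lives in this regime. Ruling out the possibility of some exotic high-genus family beating the hexagonal asymptotic would require either a Cheeger/isoperimetric-type inequality on metrized graphs strong enough to dominate $\tau$ from below, or a clever surgery argument showing that any deviation from the hexagonal pattern strictly increases $\tau/\ell$. Establishing such a rigidity statement is the essential obstacle, and overcoming it is what would turn the numerical and asymptotic evidence of this paper into a full proof of the conjecture.
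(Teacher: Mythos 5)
You are attempting to prove a statement that the paper itself presents only as a conjecture: Baker and Rumely's tau lower bound conjecture is open, the paper supplies no proof of it, and your proposal does not supply one either. What the paper actually proves points the other way: it constructs families (the hexagonal nets $\mathrm{H}(n,n)$, and numerically the families $MM(a,b)$ and $TT(a,b,c)$) whose normalized tau constants approach $\frac{1}{108}$ from above, showing that \emph{if} the refined conjecture holds then the constant $\frac{1}{108}$ cannot be improved. Your outline correctly absorbs the paper's heuristics (scale invariance, discarding bridges, preferring cubic highly symmetric graphs, the spectral formula $\tg=\frac{1}{108}(1+\frac{2}{v})^2+\frac{1}{v}\tr(\plm)$ for the relevant class), but the step you yourself flag as the obstacle --- ruling out exotic high-genus families that beat the hexagonal asymptotic --- is precisely the content of the open problem, and nothing in the proposal closes it. A plan whose ``hardest step'' is admitted to be unestablished is not a proof.

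Several intermediate steps are also wrong or too weak as stated. First, a bridge of length $b$ contributes exactly $b/4$ to $\tg$, which gives $\tg\ge b/4$ and hence $\tg/\elg\ge b/(4\elg)$, not $\tg/\elg\ge 1/4$; the legitimate reduction to bridgeless graphs instead goes through the additive property $\tau(\ga_1\cup\ga_2)=\tau(\ga_1)+\tau(\ga_2)$ together with the corresponding decomposition of total length, which you do not carry out. Second, the proposed master inequality $\tg\ \ge\ \frac{1}{108}\,\elg-\Phi(\ga)$ with $\Phi(\ga)\ge 0$ has the wrong sign: it is \emph{weaker} than the conjecture and can never imply it; you would need $\tg\ge\frac{1}{108}\elg+\Phi(\ga)$. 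Third, the Foster-type identity is $\sum_{e_i\in\ee{\ga}}\frac{r(p_i,q_i)}{\li}=\sum_{e_i\in\ee{\ga}}\frac{\ri}{\li+\ri}=v-1$, while $\sum_{e_i\in\ee{\ga}}\frac{\li}{\li+\ri}=g$; your version $\sum_e r(e)/\ell_e=g(\ga)$ conflates the two. Fourth, the space of unit-length metrized graphs of bounded genus is not compact (edge lengths degenerate to zero and the combinatorial type jumps), so the claimed Lagrange-multiplier argument does not come for free; the known partial results quoted in the paper ($\tg\ge\elg/108$ when $v\le 48$, or when the edge connectivity is at least $6$) are established in \cite{C5} by entirely different means. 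In short: the candidate extremizers you identify agree with the paper's, but the conjecture remains unproved.
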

We call this Baker and Rumely's tau lower bound conjecture.
%
%
We think that this conjecture can be refined as follows \cite{C5}:
\begin{conjecture}\label{conj lower bound3}
For all metrized graphs $\Gamma$,
$\tau(\Gamma) > \frac{1}{108} \cdot \ell(\Gamma)$.
\end{conjecture}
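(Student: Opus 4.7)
The plan is to exploit the scale invariance of the ratio $\tau(\ga)/\ell(\ga)$ and reduce to showing $\tau(\ga) \geq 1/108$ on the space of metrized graphs with $\ell(\ga) = 1$. For a fixed combinatorial model $G = (V,E)$, the metrized graphs supported on $G$ with total length one form an open simplex $\Delta_G \subset \RR^{|E|}_{>0}$ parametrized by the edge-length vector $(\ell_e)_{e \in E}$ subject to $\sum_e \ell_e = 1$. On each such simplex $\tau$ is a smooth function of the edge lengths via its formula in terms of effective resistances. I would therefore begin by analyzing $\tau$ as a constrained optimization problem on each $\Delta_G$: locate interior critical points via Lagrange multipliers, then analyze the boundary behaviour as an edge length tends to zero.

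The second step is a combinatorial reduction. When $\ell_e \to 0$ and $e$ is not a bridge, the limit is a metrized graph with combinatorial model $G/e$; when $e$ is a bridge, $\ga$ disconnects, and $\tau$ of a disjoint union decomposes cleanly. Using additivity properties of $\tau$ of Baker--Rumely type, together with the contraction/deletion identities studied in the references, I would show that every boundary infimum on $\Delta_G$ corresponds to an infimum on a strictly simpler combinatorial type. The problem then propagates cleanly down a hierarchy of finite types, and one reduces the conjecture to understanding interior critical configurations on each type.

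The third step is the crux: proving $\tau(\ga)/\ell(\ga) \geq 1/108$ \emph{uniformly} across all combinatorial types. Since the hexagonal nets around a torus approach $1/108$ while their first Betti number $g(\ga) \to \infty$, no bound depending only on finitely many types will suffice. The plan is to establish an inequality of the form $\tau(\ga) \geq F\bigl(g(\ga), \ell(\ga), \text{Kirchhoff index}\bigr) \geq \ell(\ga)/108$ using the integral representation of $\tau$ together with edge-level resistance estimates. A natural strategy is to compare an arbitrary $\ga$ to a $3$-regular reference graph of the same genus and total length, and show that the comparison is favorable under equi-distribution of edge lengths.

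The main obstacle, I expect, is precisely that the infimum is not attained: the hexagonal-net family realizes $1/108$ only as a Gromov--Hausdorff limit of graphs of unbounded combinatorial complexity, and the limit object is not a metrized graph in the usual sense. A direct variational argument on moduli thus fails, and the proof seems to require either a monotonicity theorem along a natural flow (each step of which decreases $\tau/\ell$ toward a hexagonal pattern), or a sharp quantitative inequality of isoperimetric type whose near-extremizers are forced to resemble $3$-regular hexagonal graphs. Identifying the correct extremal characterization, and establishing stability around it against arbitrary graph topology, is the central technical difficulty I expect to confront.
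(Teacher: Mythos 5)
This statement is \conref{conj lower bound3}, an \emph{open conjecture}: the paper does not prove it, and offers no proof to compare yours against. The paper's actual contribution relative to this statement is in the opposite direction --- it constructs the hexagonal nets $\mathrm{H}^N(n,n)$ and shows (\thmref{thm main}) that their normalized tau constants tend to $\frac{1}{108}$ from above, which demonstrates that the conjectured constant would be \emph{optimal} if the conjecture is true, not that the conjecture holds. The best unconditional lower bounds currently known and cited in the paper are much weaker and conditional on structure: $\tg \geq \frac{\elg}{108}$ when $v \leq 48$ or when the edge connectivity is at least $6$ (\cite[Theorem 6.10]{C5}), and the bound \eqnref{eqn tau bounds2} for cubic graphs with equal edge lengths and edge connectivity $3$.

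Your proposal is a research program, not a proof, and you concede as much: steps one and two (scale invariance, reduction to normalized graphs on each combinatorial simplex, boundary analysis via edge contraction and the additive property) are standard and sound, but step three --- the uniform bound $\tau(\ga)/\ell(\ga) \geq \frac{1}{108}$ across all combinatorial types --- is precisely the open problem, and you supply no mechanism for it. The proposed inequality $\tau(\ga) \geq F(g, \ell, \mathrm{Kf}) \geq \ell(\ga)/108$ is not established anywhere, and the comparison to a $3$-regular reference graph of the same genus is a heuristic without a monotonicity statement behind it. Your diagnosis of the obstacle (the infimum is approached only along families of unbounded combinatorial complexity, so no compactness argument on moduli closes the gap) is accurate and consistent with the paper's findings, but identifying the obstacle is not the same as overcoming it. As written, nothing in the proposal constitutes progress beyond what is already implicit in the literature the paper cites.
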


To disprove \conref{conj lower bound3} by finding a counterexample or to show if the conjectured lower bound is optimal, one needs to find metrized graphs with small tau constants. However, this is not an easy thing to do. This makes the lower bond conjecture a tantalizing problem.

In this article, we first review some background material in \secref{sec tau constant}. In \secref{sec symmetry}, we gather various related results about tau constant (from \cite{BRh}, \cite{BF}, \cite{C1}, \cite{C2}, \cite{C3}, \cite{C5}, \cite{C8}) to obtain necessary conditions to have small tau constants. Using the conditions we obtained in this article and interpreting our previous results, we develop a strategy to search small tau constants. In the rest of the article, we apply this strategy to construct several families of metrized graphs with small tau constants. To be more precise, we have the following contributions in this article:

In \secref{sec torus}, we prove that the tau constant of hexagonal torus $\mathrm{H}(n,m)$ approaches to $\frac{\ell(\mathrm{H}(n,m))}{108}$ whenever $n=m$ and $n \longrightarrow \infty$. This shows that the conjectured lower bound in \conref{conj lower bound3} is the best one can have if the conjecture is true. As a byproduct, we obtain the following lower and an upper bound to the Kirchhoff index of  $\mathrm{H}(n,n)$ (see \thmref{thm kirchhoff} below):
$$\frac{2n(n+1)^2(2n+3)}{3}+\frac{(n+1)^2}{3} \leq Kf(\mathrm{H}(n,n)) \leq \frac{n(n+1)^2(n+2)(n+3)}{3}+\frac{(n+1)^2}{3}.$$

In \secref{sec numerical examples}, we construct two other families of metrized graphs, and we compute their tau constants numerically by implementing the algorithm given in \cite{C3} in Matlab and Mathematica. These computations, which are improved versions of the ones given in \cite[Chapter 6]{C1}, show that metrized graphs $\mathrm{H}(n,n)$ are not the only ones having small tau constants. Moreover, the computations suggest that the tau constant of metrized graph $\ga$ in those two families approaches to $\frac{\elg}{108}$ more rapidly than the graphs $\mathrm{H}(n,n)$ as their number of vertices tends to infinity.

\section{The tau constant of a metrized graph}\label{sec tau constant}

In this section, we set the notation and recall definitions that we use in later sections.
To make this article as much self contained as possible, we revise the basic facts that we use frequently.
However, we want to make the size of this section minimal. One who seeks further information on any of the materials included in this section may consult to the provided references.

A metrized graph $\ga$ is a finite connected graph whose edges are equipped with a distinguished
parametrization \cite{RumelyBook}.

A metrized graph can have multiple edges and self-loops. For any given $p \in \ga$,
the number of directions emanating from $p$ will be called the \textit{valence} of $p$, and will be denoted by
$\va(p)$. By definition, there can be only finitely many $p \in \ga$ with $\va(p)\not=2$.

A vertex set for a metrized graph $\ga$ is a finite set of points $\vv{\ga}$ in $\ga$ which contains all the points
with $\va(p)\not=2$. It is possible to enlarge a given vertex set by adjoining additional points of valence $2$ as vertices.

Given a metrized graph $\ga$ with vertex set $\vv{\ga}$, the set of edges of $\ga$ is the set of closed line segments with end points in $\vv{\ga}$. We will denote the set of edges of $\ga$ by $\ee{\ga}$. However, we will denote the graph obtained from $\ga$ by deletion of the interior points of an edge $e_i \in \ee{\ga}$ by $\ga-e_i$.

We denote $\# (\vv{\ga})$ and $\# (\ee{\ga})$ by $v$ and $e$, respectively. We define the genus $g$ of a metrized graph as the first Betti number, i.e., $g=e-v+1$.
We denote the length of an edge $e_i \in \ee{\ga}$ by $\li$. The total length of $\ga$, which will be denoted by $\elg$, is given by $\elg=\sum_{i=1}^e\li$.

If we change each edge length in $\ga$ by multiplying  with $\frac{1}{\ell(\ga)}$, we
obtain a new graph which is called normalization of $\ga$ and denoted by $\ga^{N}$. Thus, $\ell(\ga^{N})=1$.
If $\ell(\ga)=1$, we call $\ga$ a normalized metrized graph.

The vertex connectivity of $\ga$ is the minimum number of vertices that should be deleted to make the metrized graph $\ga$ disconnected.
Similarly, the edge connectivity $\Lambda(\ga)$ of $\ga$ is the minimum number of edges that one should delete to make $\ga$ disconnected.


For any $x$, $y$, $z$ in $\ga$, the voltage function $j_z(x,y)$ on
$\ga$ is a symmetric function in $x$ and $y$, which satisfies
$j_x(x,y)=0$ and $j_x(y,y)=r(x,y)$, where $r(x,y)$ is the resistance
function on $\ga$. For each vertex set $\vv{\ga}$, $j_{z}(x,y)$ is
continuous on $\ga$ as a function of all three variables.
We have $j_z(x,y) \geq 0$ for all $x$, $y$, $z$ in $\ga$.
For proofs of these facts, see \cite{CR}, \cite[sec 1.5 and sec 6]{BRh}, \cite[Appendix]{Zh1} and \cite{BF}.

%

We consider \eqnref{eqn tau formula} (\cite[Lemma 14.4]{BRh}) below as the definition of the tau constant $\tg$ of a metrized graph $\ga$. Although, its original definition can be found in \cite[Section 14]{BRh}. For any fixed $p \in \Gamma$, we have
\begin{equation}\label{eqn tau formula}
\begin{split}
\tau(\Gamma) = \frac{1}{4} \int_\Gamma
\left( \ddx r(x,p) \right)^2 dx .
\end{split}
\end{equation}

%
One can find more detailed information on $\tg$ in articles \cite{C1}, \cite{C2}, \cite{C3} and \cite{C5}.

%
%

Let $\ga-e_i$ be a connected graph for an edge $e_i \in \ee{\ga}$ of length $\li$. Suppose $\pp$ and $\qq$ are the end points of $e_i$, and $p \in \ga-e_i$. Let $\hj{x}{y}{z}$ be the voltage function in $\ga-e_i$. Throughout this paper, we will use the following notation:
$R_{a_i,p} := \hj{\pp}{p}{\qq}$, $R_{b_i,p} := \hj{\qq}{\pp}{p}$, $R_{c_i,p} := \hj{p}{\pp}{\qq}$, and $\ri$ is the resistance
between $\pp$ and $\qq$ in $\ga-e_i$. Note that $R_{a_i,p}+R_{b_i,p}=\ri$ for each $p \in \ga$. When $\ga-e_i$ is not connected,
if $p$ belongs to the component of $\ga-e_i$ containing $\pp$ we set $R_{b_i,p}=\ri=\infty$ and $R_{a_i,p}=0$,
while if $p$ belongs to the component of $\ga-e_i$ containing $\qq$ we set $R_{a_i,p}=\ri=\infty$ and $R_{b_i,p}=0$.

Using parallel circuit reduction, we obtain
\begin{equation}\label{eqn edge resistance}
\begin{split}
r(p_i,q_i)=\frac{\li \ri}{\li + \ri}.
\end{split}
\end{equation}

By computing the integral in \eqnref{eqn tau formula}, one obtains the following formula for the tau constant:
\begin{proposition}\label{proptau}
Let $\Gamma$ be a metrized graph, and let $L_i$ be the length of the
edge $e_{i}$, for $i \in \{1,2, \dots, e\}$.
Using the notation above,
if we fix a vertex $p$ we have
\[
\ta{\ga} = \frac{1}{12} \sum_{e_i \in \ee{\ga}} \frac{\li^3}{(\li+\ri)^2}
+\frac{1}{4} \sum_{e_i \in \ga}\frac{\li(R_{a_{i},p}-R_{b_{i},p})^2}{(\li+\ri)^2}.
\]
Here, if $\ga-e_i$ is not connected, i.e. $\ri$ is infinite, the
summand corresponding to $e_i$ should be replaced by $3\li$, its limit as $\ri \longrightarrow
\infty$.
\end{proposition}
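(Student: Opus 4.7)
The plan is to compute the integral defining $\tg$, namely $\frac{1}{4}\int_\ga(\ddx r(x,p))^2\,dx$, edge by edge. On each edge $e_i$ (parametrized by arclength $x\in[0,\li]$ with $x=0$ at $\pp$), the goal is an explicit formula for $\ddx r(x,p)$ as an affine function of $x$ whose coefficients involve only $\li$, $\ri$, $R_{a_i,p}$, and $R_{b_i,p}$; once that is in hand, squaring, integrating, and summing will give the stated identity.

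To derive the slope on a single edge, I would replace the three-terminal network $\ga-e_i$ at the terminals $\{\pp,\qq,p\}$ by its Kirchhoff Y-equivalent: a star graph with centre $O$ and three arms of resistances $R_{a_i,p}$ (to $\pp$), $R_{b_i,p}$ (to $\qq$), and $R_{c_i,p}$ (to $p$). Using the definitions of $R_{a_i,p},R_{b_i,p},R_{c_i,p}$ as values of the voltage function on $\ga-e_i$, a direct check identifies these with the arm resistances of the Y; in particular $R_{a_i,p}+R_{b_i,p}=\ri$, $\hat r(\pp,p)=R_{a_i,p}+R_{c_i,p}$, and $\hat r(\qq,p)=R_{b_i,p}+R_{c_i,p}$. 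In the resulting Y-plus-$e_i$ network I inject one unit of current at an interior point $x$ of $e_i$ and extract it at $p$; letting $I_a$ and $I_b$ denote the currents flowing along $e_i$ toward $\pp$ and toward $\qq$ respectively (so $I_a+I_b=1$), Kirchhoff's voltage law around the loop through the two arms and $e_i$ yields
\[
I_a=\frac{\li-x+R_{b_i,p}}{\li+\ri},\qquad I_b=\frac{x+R_{a_i,p}}{\li+\ri}.
\]
Since $r(x,p)=I_a(x+\hat r(\pp,p))+I_b\,R_{c_i,p}$, differentiating in $x$ and simplifying via $\hat r(\pp,p)-R_{c_i,p}=R_{a_i,p}$ produces the crucial identity
\[
\ddx r(x,p)=\frac{\li-2x+R_{b_i,p}-R_{a_i,p}}{\li+\ri}.
\]

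The rest is then mechanical. Squaring and integrating gives
\[
\int_0^{\li}\!\bigl(\li-2x+R_{b_i,p}-R_{a_i,p}\bigr)^2\,dx=\tfrac{1}{3}\li^{\,3}+\li\,(R_{a_i,p}-R_{b_i,p})^2,
\]
so dividing by $4(\li+\ri)^2$ and summing over $i$ recovers the stated formula. For the bridge case ($\ri=\infty$) I would simply note that $r(x,p)$ is affine in $x$ with slope $\pm 1$ on a bridge, so $(\ddx r)^2=1$ and the edge contributes $\li/4$ to $\tg$; this matches the stated replacement of the summand by $3\li$ (after the overall factor of $1/12$), and can also be recovered as the $\ri\to\infty$ limit of the combined formula under the conventions $R_{a_i,p}=0$, $R_{b_i,p}=\ri$. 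The main obstacle is the circuit-theoretic derivation of $\ddx r(x,p)$ via the Y-reduction of $\ga-e_i$ together with the identification of the arm resistances with $R_{a_i,p},R_{b_i,p},R_{c_i,p}$; once that identity is secured, the integration and the bridge case are straightforward.
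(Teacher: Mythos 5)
Your argument is correct and is essentially the proof the paper relies on: the paper itself only cites \cite[Proposition 2.9]{C2}, and that proof proceeds exactly as you do, via the Y-equivalent of the three-terminal network $\ga-e_i$ to get $r(x,p)=\frac{(x+R_{a_i,p})(\li-x+R_{b_i,p})}{\li+\ri}+R_{c_i,p}$, hence $\ddx r(x,p)=\frac{\li-2x+R_{b_i,p}-R_{a_i,p}}{\li+\ri}$, followed by the same squaring, integration, and limiting treatment of bridges. No gaps; your derivation of the slope, the evaluation of the integral, and the $\ri\to\infty$ check all agree with the cited computation.
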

\propref{proptau} was obtained in 2003 by a REU group lead by Baker and Rumely.
Its proof can be found in \cite[Proposition 2.9]{C2}.


Chinburg and Rumely \cite[page 26]{CR} showed that
\begin{equation}\label{eqn genus}
\sum_{e_i \in \ee{\ga}}\frac{\li}{\li +\ri}=g.
\quad \text{equivalently } \sum_{e_i \in \ee{\ga}}\frac{\ri}{\li +\ri}=v-1.
\end{equation}

To have a well-defined discrete Laplacian matrix $\lm$ for a metrized
graph $\ga$, we first choose a vertex set $\vv{\ga}$ for $\ga$ in
such a way that there are no self loops, and no multiple edges
connecting any two vertices. This can be done by
enlarging the vertex set by considering additional valence two points as vertices
whenever needed. We call such a vertex set $\vv{\ga}$ adequate.
Note that if $\ga$ has no any self loops or multiple edges, then any vertex set of $\ga$ is adequate.

If distinct vertices $p$ and $q$ are the end points of an edge, we
call them adjacent vertices.

Let $\ga$ be a metrized graph with $e$ edges and an adequate vertex set
$\vv{\ga}$ containing $v$ vertices. Fix an ordering of the vertices
in $\vv{\ga}$. Let $\{L_1, L_2, \cdots, L_e\}$ be a labeling of the
edge lengths. The matrix $\am=(a_{pq})_{v \times v}$ given by
\[
a_{pq}=\begin{cases} 0, & \quad \text{if $p = q$, or $p$ and $q$ are
not adjacent}.\\
\frac{1}{L_k}, & \quad \text{if $p \not= q$, and an edge of length $L_k$ connects $p$ and $q$.}\\
\end{cases}
\]
is called the adjacency matrix of $\ga$. Let $\dm=\diag(d_{pp})$ be
the $v \times v$ diagonal matrix given by $d_{pp}=\sum_{s \in
\vv{\ga}}a_{ps}$. Then $\lm:=\dm-\am$ is called the discrete
Laplacian matrix of $\ga$. That is, $ \lm =(l_{pq})_{v \times v}$ where
\[
l_{pq}=\begin{cases} 0, & \; \, \text{if $p \not= q$, and $p$ and $q$
are not adjacent}.\\
-\frac{1}{L_k}, & \; \, \text{if $p \not= q$, and $p$ and $q$ are
connected by} \text{ an edge of length $L_k$}\\
-\sum_{s \in \vv{\ga}-\{p\}}l_{ps}, & \; \, \text{if $p=q$}
\end{cases}.
\]

The pseudo inverse $\plm$ of $\lm$, also known as the
Moore-Penrose generalized inverse, is uniquely determined by
the following properties:
\begin{align*}
i)  \quad &\lm \plm \lm  = \lm, & \qquad \qquad
iii) \quad &(\lm \plm)^{T}   = \lm \plm,
\\ ii) \quad &\plm \lm \plm  = \plm, & \qquad \qquad
iv) \quad &(\plm \lm)^{T}   = \plm \lm .
\end{align*}

We denote the trace of $\plm$ by $\tr (\plm)$.

\begin{remark}\label{rem doubcent1}
Since $\plm = (\plpq)_{v \times v}$ is doubly centered, $\sum_{p \in \vv{\ga}} \plpq =0$,
for each $q \in \vv{\ga}$. Also, $\plpq = l_{qp}^+$, for each $p$,
$q$ $\in \vv{\ga}$.
\end{remark}
The following result is known for any graph (see \cite{RB2}, \cite{RB3}, \cite[Theorem A]{D-M}).
It also holds for a metrized graph after a choice of an adequate vertex set.
\begin{lemma}\label{lem disc2}
Suppose $\ga$ is a metrized graph with the discrete Laplacian $\lm$ and the
resistance function $r(x,y)$. Let $\plm$ be a the pseudo inverse $\plm$ of $\lm$ we have
$$r(p,q)=l_{pp}^+-2l_{pq}^+ + l_{qq}^+, \quad \text{for any $p$, $q$ $\in \vv{\ga}$}.$$
\end{lemma}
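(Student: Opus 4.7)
The plan is to interpret the metrized graph $\ga$, with its chosen adequate vertex set, as a resistor network on $\vv{\ga}$ in which the edge of length $L_k$ carries resistance $L_k$ (so conductance $1/L_k$). Under this interpretation the discrete Laplacian $\lm$ defined in the text is exactly the weighted Kirchhoff matrix of the network, and the resistance function $r(p,q)$ between vertices coincides with the effective electrical resistance. This reduces the lemma to a statement purely about $\lm$ and its pseudo inverse $\plm$.

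The first step is to recall the electrical-network characterization of $r(p,q)$. If we inject a unit current at $p$ and extract it at $q$, Kirchhoff's and Ohm's laws force the resulting node potentials $\phi \in \RR^{v}$ to satisfy $\lm \phi = e_p - e_q$, where $e_p$, $e_q$ are the standard basis vectors; and by definition the effective resistance satisfies $r(p,q) = \phi(p) - \phi(q) = (e_p - e_q)^T \phi$. The potential $\phi$ is only determined up to an additive constant because $\ker \lm$ is spanned by the all-ones vector $\mathbf{1}$ (the graph is connected).

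The second step is to use the defining properties of $\plm$ to pin down a specific solution. Because $\plm$ is doubly centered (\remref{rem doubcent1}), $\plm \mathbf{1} = 0$, so $\phi_0 := \plm (e_p - e_q)$ is orthogonal to $\mathbf{1}$. Using property (i) of $\plm$, namely $\lm \plm \lm = \lm$, together with the facts that $\lm \plm$ is symmetric (property (iii)) and that $\lm \plm$ is the orthogonal projection onto $(\ker \lm)^{\perp} = \mathbf{1}^{\perp}$, one obtains $\lm \phi_0 = \lm \plm (e_p - e_q) = e_p - e_q$, since $e_p - e_q$ is already orthogonal to $\mathbf{1}$. Hence $\phi_0$ is a legitimate potential vector.

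Finally, substituting $\phi_0$ into the formula for $r(p,q)$ gives
\[
r(p,q) = (e_p - e_q)^T \plm (e_p - e_q) = \plp - \plpq - l_{qp}^+ + \plq,
\]
and the symmetry $\plpq = l_{qp}^+$ from \remref{rem doubcent1} collapses this to the claimed expression $\plp - 2\plpq + \plq$. The main (mild) obstacle is justifying the electrical-network interpretation of $r(x,y)$ on a metrized graph when restricted to vertices of an adequate vertex set; this follows from the standard construction of the resistance function via energy minimization referenced in \cite{CR} and \cite{BRh}, so one would cite those sources rather than redevelop the theory.
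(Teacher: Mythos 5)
Your proof is correct. The paper gives no proof of this lemma at all---it cites it as a known result from \cite{RB2}, \cite{RB3} and \cite[Theorem A]{D-M}---and your argument (identify $r(p,q)$ with the effective resistance of the resistor network whose Kirchhoff matrix is $\lm$, take $\phi_0=\plm(e_p-e_q)$ as the potential using that $\lm\plm$ is the orthogonal projection onto $\mathbf{1}^{\perp}$ and $e_p-e_q\perp\mathbf{1}$, and evaluate $r(p,q)=(e_p-e_q)^{T}\plm(e_p-e_q)$) is precisely the standard derivation found in those references; the one step you delegate to citation, namely that the metrized-graph resistance function restricted to an adequate vertex set agrees with the discrete effective resistance, is indeed standard and appropriately sourced to \cite{CR} and \cite{BRh}.
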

One can also use $\plm$ to compute the values of voltage functions:
\begin{lemma}\cite[Lemma 3.5]{C8}\label{lem voltage1}
Let $\ga$ be a metrized graph with the discrete Laplacian $\lm$ and the voltage
function $j_x(y,z)$. Let $\plm$ be a the pseudo inverse $\plm$ of $\lm$. Then for any $p$, $q$, $s$ in $\vv{\ga}$,
$$j_p(q,s)=l_{pp}^+ - l_{pq}^+ -l_{ps}^+ +l_{qs}^+.$$
\end{lemma}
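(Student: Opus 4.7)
The plan is to reduce the identity to \lemref{lem disc2} via the classical polarization formula for the voltage function:
\begin{equation*}
j_p(q,s) \;=\; \tfrac{1}{2}\bigl(r(p,q) + r(p,s) - r(q,s)\bigr).
\end{equation*}
Once this identity is in hand, substituting \lemref{lem disc2} term-by-term and using symmetry $l_{pq}^{+}=l_{qp}^{+}$ (cf. \remref{rem doubcent1}) yields
\begin{equation*}
j_p(q,s) = \tfrac{1}{2}\bigl[(l_{pp}^+ - 2 l_{pq}^+ + l_{qq}^+) + (l_{pp}^+ - 2 l_{ps}^+ + l_{ss}^+) - (l_{qq}^+ - 2 l_{qs}^+ + l_{ss}^+)\bigr],
\end{equation*}
and the $l_{qq}^{+}$ and $l_{ss}^{+}$ terms cancel, leaving exactly $l_{pp}^+ - l_{pq}^+ - l_{ps}^+ + l_{qs}^+$. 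So the entire content of the lemma is the polarization identity.

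To establish the polarization identity, I would work from the three defining properties of $j_z(x,y)$ recalled in \secref{sec tau constant}: symmetry $j_z(x,y)=j_z(y,x)$, the vanishing $j_x(x,y)=0$, and the diagonal evaluation $j_x(y,y)=r(x,y)$. Fix $p$ and, on $\ga$, consider the function $\phi(y) := j_p(q,y) + j_p(s,y) - j_p(q,s) - j_p(s,q)/1$ (viewing $q,s$ as parameters). Using symmetry one sees $\phi(p)=0$, and one computes the Laplacian (in the sense of \cite{BRh}) to show $\phi$ is harmonic away from $q$ and $s$. Evaluating at $y=q$ gives $r(p,q)+j_p(s,q)-j_p(q,s)$, which equals $r(p,q)$ by symmetry; similarly $\phi(s)=r(p,s)$. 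Alternatively, and more in line with the rest of the paper, the identity may simply be cited from \cite{CR} or \cite[Sec.~1.5--6]{BRh}, where it appears as a consequence of the superposition/reciprocity principle for resistive networks.

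For readers who prefer a self-contained proof that does not go through the continuous voltage function, an equally clean alternative is to verify the formula at the level of the discrete Laplacian directly: define $\psi(y) := l_{pp}^+ - l_{pq}^+ - l_{py}^+ + l_{qy}^+$, and check that (i) $\psi(p)=0$, (ii) $\psi(q) = r(p,q)$ by \lemref{lem disc2}, and (iii) $\lm\psi = e_q - e_p$ on $\vv{\ga}$. Property (iii) follows from $\lm\plm = I - \tfrac{1}{v} J$ (where $J$ is the all-ones matrix), together with $\sum_y l_{sy}=0$; this kills the constant terms in $\psi$ and produces $\delta_{sq}-\delta_{sp}$ at each $s\in\vv{\ga}$. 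Since the voltage function $y \mapsto j_p(q,y)$ is uniquely characterized on an adequate vertex set by these same three conditions, $\psi \equiv j_p(q,\cdot)$.

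The main obstacle is really just the justification of the polarization identity (or equivalently the uniqueness statement in the alternative approach). Both are standard facts about electrical networks, so beyond that the argument is purely algebraic. No sharp estimate or delicate analytic input is needed; the lemma is essentially a bookkeeping translation between the resistance function and the pseudo-inverse $\plm$.
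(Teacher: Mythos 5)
The paper offers no proof of this lemma at all: it is imported verbatim from \cite[Lemma 3.5]{C8}, so there is nothing internal to compare against and the value of your write-up lies in actually supplying an argument. Your main line is correct. The polarization identity $j_p(q,s)=\tfrac{1}{2}\bigl(r(p,q)+r(p,s)-r(q,s)\bigr)$ together with term-by-term substitution of \lemref{lem disc2} and the symmetry $l_{pq}^+=l_{qp}^+$ from \remref{rem doubcent1} gives exactly the claimed formula, and your displayed cancellation is right. Your third, purely discrete argument is also complete and is arguably the cleanest self-contained route: $\psi(y)=l_{pp}^+-l_{pq}^+-l_{py}^++l_{qy}^+$ satisfies $\psi(p)=0$ and $\lm\psi=e_q-e_p$ because $\lm\plm=I-\tfrac{1}{v}J$ and the rows of $\lm$ sum to zero; since $\ker\lm$ consists of constants, this pins $\psi$ down as the vertex restriction of $j_p(q,\cdot)$, granting the standard fact that the continuous voltage function restricted to an adequate vertex set solves the discrete equation with source $e_q-e_p$.

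One local error you should fix: the auxiliary function $\phi(y)=j_p(q,y)+j_p(s,y)-j_p(q,s)-j_p(s,q)$ that you propose for proving polarization does not do what you claim. Since $j_p(q,p)=j_p(s,p)=0$, one gets $\phi(p)=-2j_p(q,s)$, which is nonzero in general, and $\phi(q)=r(p,q)-j_p(q,s)$ rather than $r(p,q)$; your evaluation at $y=q$ silently drops one of the two subtracted constants. A correct harmonic-function derivation of polarization is to observe that $j_p(x,y)$ obeys the continuous analogue of \lemref{lem disc2}, namely $r(q,s)=j_p(q,q)-2j_p(q,s)+j_p(s,s)$, which with $j_p(q,q)=r(p,q)$ and $j_p(s,s)=r(p,s)$ rearranges to the polarization identity; alternatively one may simply cite it from \cite{CR} or \cite{BRh} as you suggest. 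Because you supply two other valid justifications, this slip does not sink the proof, but as written that paragraph is wrong and should be repaired or deleted.
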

One can find more information about $\lm$ and $\plm$ in \cite[Section 3]{C3} and the references therein.

One can use \remref{rem doubcent1}, \lemref{lem disc2} and the definition of the trace to show the following equalities:
\begin{equation}\label{eqn res trace}
\begin{split}
\sum_{q \in \vv{\ga}} r(p,q) = v \cdot \plp + \tr(\plm)  \, \,  \text{  for any $p \in \vv{\ga}$, and } \, \, \sum_{p, \, q \in \vv{\ga}} r(p,q) = 2 v \cdot \tr(\plm).
\end{split}
\end{equation}
Similarly, one can show by using \remref{rem doubcent1} and \lemref{lem voltage1} that
\begin{equation}\label{eqn vol trace}
\begin{split}
\sum_{s \in \vv{\ga}}j_s(p,q)=\tr (\plm)+ v l_{p q}^+ \quad \text{for any $p, \, q \in \vv{\ga}$}.
\end{split}
\end{equation}


\section{Highly Symmetric Cases}\label{sec symmetry}
\vskip .1 in

In this section, we build an intuition about the size and behavior of the tau constant. Because, we want to understand the properties of metrized graphs that potentially have small tau constants. For this purpose, we recall all the relevant known results on tau constant. By interpreting these results, we can have educated guesses to find metrized graphs with small tau constants.

It is known that $\tg = \frac{\elg}{4}$ whenever the metrized graph $\ga$ is a tree graph (see \cite[Equation 14.3]{BRh}). Moreover, $\tg \leq \frac{1}{4}$ if $\ga$ is normalized metrized graph. In fact, if an edge $e_i$ is a bridge (an edge whose deletion disconnects the graph) of length $\li$, it makes its highest contribution to $\tg$ by $\frac{\li}{4}$. Therefore, one should consider metrized graphs without bridges to have small tau constants. In particular, this implies that we should consider metrized graphs without points of valence $1$.

If the metrized graph $\ga$ is a circle graph, we have $\tg = \frac{\elg}{12}$ (see \cite[Example 16.5]{BRh}).  Moreover, $\tg \leq \frac{\elg}{12}$ if $\ga$ is a bridgeless metrized graph (see \cite[Corrollary 5.8]{C2}), i.e. if the edge connectivity of $\ga$ is at least $2$.

We know that $\tg \geq \frac{1}{4}r(p,q)$ for any two points $p$ and $q$ in $\ga$ (see \cite[Theorem 2.27]{C2}). Thus, we want to have small maximal resistance distance in a graph to have small tau constant.

Whenever we have a vertex $p \in \vv{\ga}$ with $\va(p) \geq 3$, considering points $q \in \ga$ with $\va(q)=2$ as a vertex or not does not change $\ga$, which is called the valence property of tau constant (see \cite[Remark 2.10]{C2}). Therefore, in our search of small tau constants, we work with metrized graphs that we can keep $\vv{\ga}$ minimal by considering only the points with valence at least $3$ as vertices.

Suppose $\beta$ is the metrized graph obtained from a metrized graph $\ga$ by multiplying each edge length of $\ga$ by a constant $M$. Then $\vv{\beta}=\vv{\ga}$, $\ee{\beta}=\ee{\ga}$ and $\ell(\beta)=M \cdot \elg$. For the resistance functions $r_{\ga}(x,y)$ on $\ga$ and $r_{\beta}(x,y)$ on $\beta$, we have $r_{\beta}(x,y)=M \cdot r_{\ga}(x,y)$. Now, if we use \propref{proptau}, we obtain $\tau(\beta)= M \cdot \tau(\ga)$. This gives $\tau(\beta^N)=\frac{\tau(\beta)}{\ell(\beta)}=\frac{\tg}{\ell(\ga)}=\tau(\ga^N)$ which is one of the motivation to have \conref{conj tau lower bound1} (see \cite[Page 265]{BRh}). We call this property the scale independence of tau constant. Using this property, we can work with normalized metrized graphs in our search of small tau constants.

Now, we recall some basic facts and definitions from graph theory. A graph is called $r$-regular if $\va(p)=r$ for every vertex $p$ of the graph. For an $r$-regular graph with $e$ edges and $v$ vertices, we have
\begin{equation}\label{eqn regular}
\begin{split}
e &= \frac{r \cdot v}{2}, \, \quad \text{if the graph is $r$-regular},\\
e & \geq \frac{r \cdot v}{2}, \, \quad \text{if $\va(p) \geq r$ for every vertex $p$ in the graph}.
\end{split}
\end{equation}
A $3$-regular graph is called a cubic graph. In particular, we have $e = \frac{3 \cdot v}{2}$ for a cubic graph, and $e \geq \frac{3 \cdot v}{2}$ if $\va(p) \geq 3$ for every vertex $p$.

If $v \leq 48$, $\tg \geq \frac{\elg}{108}$ (see \cite[Theorem 6.10 part (2)]{C5}). Hence, we should consider metrized graphs having more than $48$ vertices for our purpose.

If $\ga_1 \cap \ga_2 = \{  p \}$ for any two metrized graphs $\ga_1$ and $\ga_2$, we have $\tau(\ga_1 \cup \ga_2) = \tau(\ga_1)+\tau(\ga_2)$ where the union is taken along the vertex $p$. This is called the additive property of the tau constant (see \cite[Page 15]{C2}). As a consequence of this property, we can restrict our attention to the metrized graphs with vertex connectivity at least $2$ to find smaller tau constants. In particular, we should consider metrized graphs without self loops, since the vertex connectivity is $1$ for such graphs.

The formula for the tau constant given in \propref{proptau} has two positive parts. Namely, the first part contains the term
$\sum_{e_i \in \ee{\ga}} \frac{\li^3 }{(\li+\ri)^2}$,
and the second part contains the term
$\sum_{e_i \in \ga}\frac{\li(R_{a_{i},p}-R_{b_{i},p})^2}{(\li+\ri)^2}$.
To have smaller $\tg$, one idea is to find examples of metrized graphs
for which these two terms are smaller. Now, we recall the following equality \cite[Equation 3]{C3}:
\begin{equation}\label{eqn term2b}
\begin{split}
\sum_{e_i \in \ga}\frac{\li(R_{a_{i},p}-R_{b_{i},p})^2}{(\li+\ri)^2} = \sum_{e_i \in \ga}\frac{(r(p_i,p)-r(q_i,p))^2}{\li},
\end{split}
\end{equation}
where $p_i$ and $q_i$ are the end points of the edge $e_i$.
To have smaller second term, \eqnref{eqn term2b} suggests that we need $r(p_i,p)$ and $r(q_i,p)$ be close to each other as much as possible for any choices of $e_i$ and $p$. This means that the metrized graph should have high symmetry in terms of the positioning of the vertices, and that have equal edge lengths. Moreover, this is more likely the case if we deal with a regular metrized graph.

If we have larger $\ri$'s, we will have smaller first term $\sum_{e_i \in \ee{\ga}} \frac{\li^3 }{(\li+\ri)^2}$. This suggests that we should consider metrized graphs with small edge connectivity. To put it roughly, this is because of the fact that having more parallel edges between any two points in the network means having lower effective resistance between those two points. Another result supporting this observation is the following inequality for a normalized $\ga$ \cite[Theorem 2.25]{C2}:
$$\sum_{e_i \in \ee{\ga}} \frac{\li^3 }{(\li+\ri)^2} \geq \frac{1}{(1+\sum_{e_i \in \ee{\ga}} \ri)^2}.$$
We also note that we should consider metrized graphs with larger girth to have larger $\ri$'s, where the girth is defined as the length of the shortest cycle in the metrized graph. Thus we can state the following remark:
\begin{remark}\label{rem girth}
We should consider metrized graphs with smaller edge connectivity and larger girth to have smaller first term of the tau constant.
\end{remark}

Various computations we made show that the values of these two terms from \propref{proptau} are interrelated.
That is, making one of these two terms smaller may increase the other term for any given metrized graph, and so the tau constant may not be small. For example, we can find metrized graphs with arbitrary small first terms among the family of normalized graphs $\ga(a,b,t)$ given in \cite[Example 8.2]{C2}, but then we will have very large second terms for such graphs to the point that their tau constants get closer to $\frac{1}{12}$.

Another issue to keep in mind is that the symbolic computations of tau constants can be very difficult in general. As a function of edge lengths for a metrized graph $\ga$, $\tg$ is nothing but a rational function $\frac{P}{Q}$, where $P$ and $Q$ are homogeneous polynomials in the edge lengths such that $\deg(P)=\deg(Q)+1$. Most of the time, the actual symbolic value of tau constant can be very lengthy.
However, having specific assumptions on $\ga$ can make the computation of $\tg$ easier.

Having guided by the observations above, we should consider metrized graphs $\ga$ such that
$\li=L_j$ and $\ri=R_j$ for every edges $e_i$ and $e_j$ in $\ee{\ga}$. Note that these are very strong assumptions for a metrized graph.

Whenever $\li=L_j$ and $\ri=R_j$ for every edges $e_i$ and $e_j$ in $\ee{\ga}$, $\ga$ is one of the following graphs:
\begin{itemize}
\item $\ga$ is a tree graph, i.e., $\ga$ consists of bridges only.
\item $\ga$ is the union of circle graphs along one point, i.e., $\ga$ consists of self loops only and $\ga$ has only one vertex.
\item $\ga$ does not contain any bridges or self loops.
\end{itemize}

For our purposes, we work with bridgeless metrized graphs without self loops.
\begin{theorem}\label{thm special}
Let $\ga$ be a bridgeless normalized metrized graph with no self loops such that $\li=L_j$ and $\ri=R_j$ for every edges $e_i$ and $e_j$ in $\ee{\ga}$. Then we have $\li = \frac{1}{e}$ and $\ri = \frac{v-1}{e \cdot g}$. Moreover,
\begin{align*}
\frac{\ri}{\li+\ri} &= \frac{v-1}{e},& \sum_{e_i \in \ee{\ga}} \frac{\li \ri}{\li+\ri}&=\frac{v-1}{e},&  \sum_{e_i \in \ee{\ga}} \frac{\li \ri^2}{(\li+\ri)^2}&= (\frac{v-1}{e})^2,
\\ \frac{\li}{\li+\ri}& =\frac{g}{e},&  \sum_{e_i \in \ee{\ga}} \frac{\li^2}{\li+\ri}&=\frac{g}{e},&  \sum_{e_i \in \ee{\ga}} \frac{\li^3 }{(\li+\ri)^2}&= (\frac{g}{e})^2.
\end{align*}
\end{theorem}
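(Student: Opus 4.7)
The statement is essentially a bookkeeping exercise once one observes that the regularity assumption forces every summand appearing in the formulas of Section~\ref{sec tau constant} to be a common constant. My plan is to first pin down $\li$ and $\ri$ individually, then note that all the sums in the theorem have identical summands, so each is just $e$ times a single term.

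First, since $\ga$ is normalized, $\elg = \sum_{e_i \in \ee{\ga}} \li = 1$. All $\li$ are equal, so $\li = \frac{1}{e}$. Next I invoke \eqnref{eqn genus}, which gives $\sum_{e_i \in \ee{\ga}} \frac{\li}{\li+\ri} = g$; since each summand is the same constant, this forces $\frac{\li}{\li+\ri} = \frac{g}{e}$. The companion identity in \eqnref{eqn genus} similarly yields $\frac{\ri}{\li+\ri} = \frac{v-1}{e}$ (one could also just use $\frac{\ri}{\li+\ri} = 1 - \frac{\li}{\li+\ri}$ together with $v - 1 = e - g$). Solving $\frac{\li}{\li+\ri} = \frac{g}{e}$ for $\ri$ gives $\ri = \li \cdot \frac{e-g}{g} = \frac{v-1}{e \cdot g}$, as claimed.

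For the six displayed sums, the key observation is that every summand depends only on $\li$ and $\ri$, hence is independent of $i$ under our hypothesis. So each sum equals $e$ times any one summand. For instance, $\sum \frac{\li \ri}{\li + \ri} = e \cdot \li \cdot \frac{\ri}{\li+\ri} = 1 \cdot \frac{v-1}{e} = \frac{v-1}{e}$, and $\sum \frac{\li \ri^2}{(\li+\ri)^2} = e \cdot \li \cdot \bigl(\frac{\ri}{\li+\ri}\bigr)^2 = \frac{(v-1)^2}{e^2}$. The other two sums in the bottom row are handled symmetrically: $\sum \frac{\li^2}{\li+\ri} = e \cdot \li \cdot \frac{\li}{\li+\ri} = \frac{g}{e}$ and $\sum \frac{\li^3}{(\li+\ri)^2} = e \cdot \li \cdot \bigl(\frac{\li}{\li+\ri}\bigr)^2 = \frac{g^2}{e^2}$.

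There is no real obstacle here; the content of the theorem is the initial step where one extracts $\frac{\li}{\li+\ri}$ and $\frac{\ri}{\li+\ri}$ from Chinburg--Rumely's identity \eqnref{eqn genus} using constancy. The only subtlety worth noting is that the hypotheses exclude bridges and self-loops precisely so that every $\ri$ is finite and positive, making the division in \eqnref{eqn genus} legitimate (the degenerate cases listed before the theorem are exactly where this regular structure would break). Everything after that unfolds by elementary algebra.
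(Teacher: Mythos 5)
Your proof is correct and follows essentially the same route as the paper's: extract $\li=\frac{1}{e}$ from normalization, deduce $\frac{\li}{\li+\ri}=\frac{g}{e}$ and $\frac{\ri}{\li+\ri}=\frac{v-1}{e}$ from the constancy of the summands in the Chinburg--Rumely identity, solve for $\ri$, and then evaluate each sum as $e$ times a common term. Nothing is missing.
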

\begin{proof}
Since each edge length is equal and $\ga$ is normalized, $\li=\frac{1}{e}$ for every edge $e_i$ in $\ee{\ga}$. It follows from the assumptions that
$\frac{\li}{\li +\ri} = \frac{L_j}{L_j +R_j}$ and $\frac{\ri}{\li +\ri} = \frac{R_j}{L_j +R_j}$ for every edges $e_i$ and $e_j$ in $\ee{\ga}$. Now, we use \eqnref{eqn genus} to derive $\frac{\ri}{\li+\ri} = \frac{v-1}{e}$ and $\frac{\li}{\li+\ri}=\frac{g}{e}$.
Using the latter equality along with the fact that $g=e-v+1$, we obtain $\ri = \frac{v-1}{e \cdot g}$.
One can obtain the remaining equalities in the theorem using what we derived. For example,
$$\sum_{e_i \in \ee{\ga}} \frac{\li^3}{(\li+\ri)^2} = e \cdot \li (\frac{\li}{\li+\ri})^2 = (\frac{\li}{\li+\ri})^2 = (\frac{g}{e})^2 .$$
%
\end{proof}

\begin{corollary}\label{cor special}
Let $\ga$ be an $r$-regular bridgeless normalized metrized graph with no self loops such that $\li=L_j$ and $\ri=R_j$ for every edges $e_i$ and $e_j$ in $\ee{\ga}$. Then we have $\li = \frac{2}{r \cdot v}$ and $\ri = \frac{4(v-1)}{r(r-2)v^2+2rv}$. Moreover,
\begin{align*}
\frac{\ri}{\li+\ri} &= \frac{2(v-1)}{r v},& \sum_{e_i \in \ee{\ga}} \frac{\li \ri}{\li+\ri}&=\frac{2(v-1)}{r v},&
\\ \frac{\li}{\li+\ri}& =1-\frac{2(v-1)}{r v},&  \sum_{e_i \in \ee{\ga}} \frac{\li^2}{\li+\ri}&=1-\frac{2(v-1)}{r v},&
\\ \sum_{e_i \in \ee{\ga}} \frac{\li \ri^2}{(\li+\ri)^2}&= \frac{4(v-1)^2}{r^2 v^2},& \sum_{e_i \in \ee{\ga}} \frac{\li^3 }{(\li+\ri)^2}&= (1-\frac{2(v-1)}{r v})^2.
\end{align*}
\end{corollary}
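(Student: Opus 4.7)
The plan is to apply \thmref{thm special} directly, letting the $r$-regularity assumption enter only through \eqnref{eqn regular}, which gives $e = \frac{rv}{2}$. First, I would substitute this into the formula $\li = \frac{1}{e}$ from \thmref{thm special} to obtain $\li = \frac{2}{rv}$. Next, I would compute the genus explicitly: since $g = e - v + 1$, the identity $e = \frac{rv}{2}$ yields $g = \frac{(r-2)v + 2}{2}$. Substituting into $\ri = \frac{v-1}{e g}$ from \thmref{thm special} gives
\[
\ri = \frac{v-1}{\frac{rv}{2} \cdot \frac{(r-2)v+2}{2}} = \frac{4(v-1)}{r(r-2)v^2 + 2rv},
\]
as claimed.

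For the six remaining identities, I would simply carry the substitutions $e = \frac{rv}{2}$ and $g = \frac{(r-2)v+2}{2}$ through the corresponding formulas in \thmref{thm special}. The identity $\frac{\ri}{\li+\ri} = \frac{v-1}{e}$ becomes $\frac{2(v-1)}{rv}$, and the same value appears for $\sum_{e_i} \frac{\li \ri}{\li+\ri}$ since it equals $\frac{v-1}{e}$. For $\frac{\li}{\li+\ri}$, one computes $\frac{g}{e} = \frac{(r-2)v+2}{rv} = 1 - \frac{2(v-1)}{rv}$, which simultaneously handles $\sum_{e_i} \frac{\li^2}{\li+\ri}$. The two squared sums $\sum_{e_i} \frac{\li \ri^2}{(\li+\ri)^2} = \left(\frac{v-1}{e}\right)^2$ and $\sum_{e_i} \frac{\li^3}{(\li+\ri)^2} = \left(\frac{g}{e}\right)^2$ follow by squaring the values already computed.

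There is no real obstacle here: the corollary is purely a specialization of \thmref{thm special} to the $r$-regular case via $e = \frac{rv}{2}$, and every step reduces to routine algebraic simplification. The only thing to double-check is the arithmetic rewriting $\frac{(r-2)v+2}{rv} = 1 - \frac{2(v-1)}{rv}$, which follows immediately from $(r-2)v + 2 = rv - 2(v-1)$.
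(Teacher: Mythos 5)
Your proposal is correct and matches the paper's own proof, which likewise derives the corollary from \thmref{thm special} by substituting $e=\frac{rv}{2}$ and $g=e-v+1$; you have merely written out the routine algebra the paper leaves implicit. The key simplification $\frac{(r-2)v+2}{rv}=1-\frac{2(v-1)}{rv}$ is verified correctly.
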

\begin{proof}
The results follows from \thmref{thm special} by using the facts that $e=\frac{r v}{2}$ and $g=e-v+1$.
\end{proof}
When we work with an $r$-regular metrized graph as in \corref{cor special}, we see that the term $\sum_{e_i \in \ee{\ga}} \frac{\li^3}{(\li+\ri)^2}$ decreases as $r$ gets smaller. Whenever $r=3$ we have
\begin{equation}\label{eqn cubic case}
\begin{split}
\sum_{e_i \in \ee{\ga}} \frac{\li^3}{(\li+\ri)^2} = \frac{1}{9}(1+\frac{2}{v})^2.
\end{split}
\end{equation}
Next, we turn our attention to the second term $\sum_{e_i \in \ee{\ga}} \frac{\li \big( R_{b_i, p}-R_{a_i, p} \big)^2}{(\li + \ri)^2}$.
First, we recall the following fact, where $\tr(\plm)$ denotes the trace of $\plm$:
\begin{theorem}\cite[Theorem 4.8]{C3}\label{thm disc1}
Let $\lm$ be the discrete Laplacian matrix of size $v \times v$ for a metrized
graph $\ga$, and let $\plm$ be the pseudo inverse of $\lm$.
Suppose $\pp$ and $\qq$ are the end points of edge $e_i \in \ee{\ga}$, and $\ri$, $R_{a_i, p}$, $R_{b_i, p}$ and $\li$ are as defined before. Then we have
$$\sum_{e_i \in \ee{\ga}} \frac{\li \big( R_{b_i, p}-R_{a_i, p} \big)^2}{(\li + \ri)^2} =
\frac{4}{v} \tr(\plm) -\frac{1}{2}\sum_{q, \, s  \in \vv{\ga}} l_{qs} \big( l_{qq}^+ -
l_{ss}^+ \big)^2.$$
\end{theorem}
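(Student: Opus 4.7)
My plan is to start from Equation~\eqref{eqn term2b}, which already rewrites the sum on the left in the more convenient form
\[
\sum_{e_i \in \ee{\ga}} \frac{\li(R_{a_i,p}-R_{b_i,p})^2}{(\li+\ri)^2} \;=\; \sum_{e_i \in \ee{\ga}} \frac{(r(p_i,p)-r(q_i,p))^2}{\li}.
\]
The first observation is that this quantity is independent of the chosen vertex $p$: by \propref{proptau} the tau constant admits a computation for every fixed $p$, and the first term $\tfrac{1}{12}\sum_{e_i}\li^3/(\li+\ri)^2$ does not involve $p$, so the second term cannot involve $p$ either. Therefore the expression equals its own average over $p\in\vv{\ga}$, and I will compute that average instead.

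Next I will substitute Lemma~\ref{lem disc2} into the inner square, writing
\[
r(p_i,p)-r(q_i,p) \;=\; (l_{p_ip_i}^+ - l_{q_iq_i}^+) - 2(l_{p_ip}^+ - l_{q_ip}^+),
\]
expand $(a_i - 2b_i(p))^2 = a_i^2 - 4a_i b_i(p)+4b_i(p)^2$, and sum over $p$. The linear cross term vanishes because $\plm$ is doubly centered (\remref{rem doubcent1}), so $\sum_p(l_{p_ip}^+-l_{q_ip}^+)=0$. This leaves two contributions to handle separately, one ``constant'' and one ``quadratic'' in the $p$-variables.

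The constant piece is $\sum_{e_i}(l_{p_ip_i}^+-l_{q_iq_i}^+)^2/\li$. Since $p_i$ and $q_i$ are adjacent vertices of an adequate vertex set, $l_{p_iq_i}=-1/\li$, so $1/\li=-l_{p_iq_i}$. Summing over edges (unordered) and then doubling by symmetry in $(q,s)$ of the factor $(l_{qq}^+-l_{ss}^+)^2$ converts this to $-\tfrac12\sum_{q,s\in\vv{\ga}} l_{qs}(l_{qq}^+-l_{ss}^+)^2$, which is precisely the second term on the right-hand side of the statement. (The diagonal $q=s$ contributes zero, and non-adjacent pairs vanish because $l_{qs}=0$.)

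For the quadratic piece, I will use that $\sum_p(l_{p_ip}^+)^2 = (\plm^2)_{p_ip_i}$, $\sum_p l_{p_ip}^+ l_{q_ip}^+ = (\plm^2)_{p_iq_i}$, and $\sum_p(l_{q_ip}^+)^2 = (\plm^2)_{q_iq_i}$, by the symmetry of $\plm$. Writing $N=\plm^2$, the contribution becomes $\tfrac{4}{v}\sum_{e_i}\tfrac{1}{\li}(N_{p_ip_i}-2N_{p_iq_i}+N_{q_iq_i})$. Converting $1/\li=-l_{p_iq_i}$ and passing to the ordered double sum as before yields $\tfrac{4}{v}\cdot\bigl(-\tfrac12\sum_{q,s}l_{qs}(N_{qq}-2N_{qs}+N_{ss})\bigr)$. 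The terms $\sum_{q,s}l_{qs}N_{qq}$ and $\sum_{q,s}l_{qs}N_{ss}$ vanish because rows and columns of $\lm$ sum to zero, leaving $\tfrac{4}{v}\sum_{q,s}l_{qs}N_{qs} = \tfrac{4}{v}\tr(\lm\plm^2)$. The final step is the identity $\lm\plm=I-\tfrac{1}{v}J$ (the orthogonal projection onto $\mathbf{1}^\perp$, which follows from the four Moore--Penrose axioms and $\ker\lm=\mathrm{span}(\mathbf{1})$), which gives $\tr(\lm\plm^2)=\tr(\plm)-\tfrac{1}{v}\tr(J\plm)=\tr(\plm)$, again using double centering to kill $\tr(J\plm)$. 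Combining the two pieces yields the claimed identity.

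I expect the only real obstacle to be bookkeeping: keeping the edge/ordered-pair conversion, the symmetry factors of $\tfrac12$, and the signs straight. The conceptual heart is short---just $\lm\plm^2=\plm-\tfrac{1}{v}J\plm=\plm$ inside a trace---and everything else is linear algebra on $\plm$ together with Lemma~\ref{lem disc2} and the $p$-independence remark from \propref{proptau}.
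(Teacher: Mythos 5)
The paper never proves this statement: it is imported verbatim from \cite[Theorem 4.8]{C3}, so there is no internal proof to compare against. Your argument is, as far as I can check, a correct and self-contained derivation from ingredients already present in this paper. The $p$-independence claim is legitimate (the right-hand side of \propref{proptau} is independent of $p$ and its first summand does not involve $p$); the cross term vanishes by \remref{rem doubcent1}; the passage between the edge sum and the ordered double sum over $\vv{\ga}$ is valid precisely because the vertex set underlying $\lm$ is adequate, so each adjacent pair is joined by a single edge and $l_{p_iq_i}=-1/\li$ (your constant piece is exactly \eqnref{eqn positive part}); and the trace computation $\tr(\lm\plm^2)=\tr(\plm)$ via $\lm\plm=I-\tfrac{1}{v}J$ and $\tr(J\plm)=0$ is correct, provided you invoke connectedness of $\ga$ so that $\ker\lm=\mathrm{span}(\mathbf{1})$ --- worth stating explicitly, though metrized graphs are connected by definition here. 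Judging from the auxiliary results of \cite{C3} that this paper does quote (Lemma 4.6 part (2) and Lemma 4.7, the latter being your constant-piece identity), the original proof also averages over $p\in\vv{\ga}$ but threads through a chain of lemmas expressing $R_{a_i,p}-R_{b_i,p}$ and $\ri/(\li+\ri)$ in terms of entries of $\plm$; your route short-circuits that with the single identity $\lm\plm^2=\plm$ inside a trace, which makes the appearance of $\tfrac{4}{v}\tr(\plm)$ essentially transparent and is arguably cleaner than lemma-chasing, at the cost of assuming familiarity with the Moore--Penrose projection formula.
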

We know by \cite[Lemma 4.7]{C3} that
\begin{equation}\label{eqn positive part}
\begin{split}
-\frac{1}{2}\sum_{q, \, s  \in \vv{\ga}} l_{qs} \big( l_{qq}^+ -
l_{ss}^+ \big)^2=\sum_{e_i  \in \ee{\ga}} \frac{1}{\li}\big( l_{\pp
\pp}^+ - l_{\qq \qq}^+ \big)^2 \geq 0.
\end{split}
\end{equation}
In order to make the second term smaller, we want to have a metrized graph with $l_{pp}^+ =
l_{qq}^+$ for any two vertices $p$ and $q$. That is, we want $\plm$ have a constant diagonal.
The following lemma describes some sufficient conditions for this purpose, which were also used in \thmref{thm special} and \corref{cor special}.
\begin{lemma}\label{lem special}
Let $\ga$ be an $r$-regular normalized metrized graph with $\li=L_j$ and $\ri=R_j$ for every edges $e_i$ and $e_j$ in $\ee{\ga}$.
Suppose $\ga$ has no multiple edges.
Then, $l_{pp}^+ = \frac{1}{v} \tr(\plm)$ for any vertex $p$. In particular, we have
$$\sum_{e_i \in \ee{\ga}} \frac{\li \big( R_{b_i, p}-R_{a_i, p} \big)^2}{(\li + \ri)^2} =
\frac{4}{v} \tr(\plm).$$
\end{lemma}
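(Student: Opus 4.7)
The plan is to first establish that the diagonal entries $l_{pp}^+$ of $\plm$ are independent of the choice of vertex $p$. Once this is known, the second identity in the statement follows at once from \thmref{thm disc1}: by \eqnref{eqn positive part}, the ``correction term'' appearing there can be rewritten as $\sum_{e_i} \frac{1}{\li}(\plpi - \plqi)^2$, which vanishes the moment the diagonal of $\plm$ is constant, leaving only $\frac{4}{v}\tr(\plm)$ on the right-hand side.

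To prove the first claim, I would show that the function $f(p) := l_{pp}^+$ is harmonic on the combinatorial graph underlying $\ga$ and then invoke connectedness. The key step is to evaluate $\sum_{q \sim p} r(p,q)$, the sum of resistances from $p$ to its $r$ neighbors, in two different ways. On the one hand, \lemref{lem disc2} gives $r(p,q) = l_{pp}^+ - 2 l_{pq}^+ + l_{qq}^+$, and summing over neighbors and applying the pseudo-inverse identity $\sum_s l_{ps}\, l_{sq}^+ = \delta_{pq} - \frac{1}{v}$ (specialized to $q=p$, using that under the hypotheses $l_{pp}=r/L$ and $l_{ps}=-1/L$ for $s\sim p$, which is where the no-multiple-edge and common-length assumptions enter) yields
\[
\sum_{q \sim p} r(p,q) \; = \; -r\, l_{pp}^+ + \frac{2L(v-1)}{v} + \sum_{q \sim p} l_{qq}^+ .
\]
On the other hand, the hypothesis $\li=L$, $\ri=R$ for every edge makes $r(p_i,q_i)=\frac{LR}{L+R}$ a common value along all edges by \eqnref{eqn edge resistance}, while $\frac{L}{L+R} = g/e$ from \eqnref{eqn genus} together with $e = rv/2$ forces $\frac{LR}{L+R} = \frac{2L(v-1)}{rv}$. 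Hence $\sum_{q \sim p} r(p,q) = \frac{2L(v-1)}{v}$ as well.

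Equating these two expressions collapses to
\[
\sum_{q \sim p} l_{qq}^+ \; = \; r\, l_{pp}^+ \quad \text{for every } p \in \vv{\ga},
\]
which says the combinatorial (unweighted) Laplacian of $\ga$ annihilates $f$. Since $\ga$ is connected, only constants lie in that kernel, so $l_{pp}^+$ is independent of $p$; summing and dividing by $v$ then identifies the common value as $\frac{1}{v}\tr(\plm)$.

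The only subtle point in this plan -- and essentially the only place the full strength of the hypotheses is used -- is the cancellation of the additive constant $r\cdot\frac{LR}{L+R} - \frac{2L(v-1)}{v}$ that would otherwise spoil the harmonicity of $f$. Fortunately, that cancellation is forced by \thmref{thm special}: since the equal summands in $\sum_{e_i}\frac{\li\ri}{\li+\ri}=\frac{v-1}{e}$ each equal $\frac{v-1}{e^2} = \frac{2L(v-1)}{rv}$, the inhomogeneous term vanishes automatically, and the remainder of the argument is pure bookkeeping.
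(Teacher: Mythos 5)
Your proposal is correct, but it reaches the key fact --- that the diagonal of $\plm$ is constant --- by a genuinely different route than the paper. The paper's proof quotes part (2) of Lemma 4.6 of \cite{C3}, which gives the identity $l_{pp}^+ = \frac{1}{v}\tr(\plm) - \sum_{e_i\in\ee{\ga}}\frac{\ri}{\li+\ri}\big(l_{pp_i}^+ + l_{pq_i}^+\big)$; since $\frac{\ri}{\li+\ri}=\frac{v-1}{e}$ is the same for every edge by \thmref{thm special}, $r$-regularity turns the edge sum into $\frac{r(v-1)}{e}\sum_{q\in\vv{\ga}}l_{pq}^+$, which vanishes by the doubly-centered property of \remref{rem doubcent1}. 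You instead double-count $\sum_{q\sim p}r(p,q)$: once via \lemref{lem disc2} and the relation $\sum_{s}l_{ps}l_{sp}^+ = 1-\frac{1}{v}$, once via the common edge resistance $\frac{LR}{L+R}=\frac{2L(v-1)}{rv}$, and the comparison yields $\sum_{q\sim p}l_{qq}^+ = r\,l_{pp}^+$, i.e.\ harmonicity of $p\mapsto l_{pp}^+$ for the unweighted combinatorial Laplacian, whence constancy by connectedness. Your computations check out (the additive constants do cancel exactly as you claim), and your argument has the merit of being self-contained rather than importing an identity from \cite{C3}. The one ingredient you should justify explicitly is $\lm\plm = \idm - \frac{1}{v}J$ (equivalently $\sum_s l_{ps}l_{sq}^+ = \delta_{pq}-\frac{1}{v}$): the paper never states it, though it follows from the Moore--Penrose axioms together with the fact that the kernel of $\lm$ for a connected graph is spanned by the all-ones vector. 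The second half of your argument --- substituting the constant diagonal into \thmref{thm disc1} via \eqnref{eqn positive part} --- is exactly what the paper does.
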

\begin{proof}
Let $p$ be a vertex of $\ga$. Then by part $(2)$ of \cite[Lemma 4.6]{C3},
\begin{equation*}\label{eqn lpp}
\begin{split}
l_{pp}^+ &= \frac{1}{v} \tr(\plm) - \sum_{e_i  \in \ee{\ga}} \frac{\ri}{\li+\ri}( l_{p
\pp}^+ + l_{p \qq}^+ ),
\\ &= \frac{1}{v} \tr(\plm) -\frac{v-1}{e} \sum_{e_i  \in \ee{\ga}} l_{p
\pp}^+ + l_{p \qq}^+ , \quad \text{by \thmref{thm special}},
\\ &= \frac{1}{v} \tr(\plm) -\frac{v-1}{e} \sum_{q  \in \vv{\ga}} \va(q) l_{pq}^+,
\\ &= \frac{1}{v} \tr(\plm) -\frac{r(v-1)}{e} \sum_{q  \in \vv{\ga}} l_{pq}^+, \quad \text{since $\ga$ is $r$-regular}.
\end{split}
\end{equation*}
Then the equality $l_{pp}^+ = \frac{1}{v} \tr(\plm)$ follows from \remref{rem doubcent1}.
Using this equality and \thmref{thm disc1}, we obtain the second equality in the lemma.
\end{proof}
Next, we use \propref{proptau} along with the results we derived in \thmref{cor special}, \lemref{lem special} and \eqnref{eqn cubic case} to obtain the following theorem:
\begin{theorem}\label{thm special2}
Let $\ga$ be a $r$-regular normalized metrized graph with $\li=L_j$ and $\ri=R_j$ for every edges $e_i$ and $e_j$ in $\ee{\ga}$.
Suppose $\ga$ has no multiple edges, bridges and self loops. Then
$$\tg=\frac{1}{12}(1-\frac{2(v-1)}{r v})^2 + \frac{1}{v} \tr(\plm).$$
In particular, if such a metrized graph $\ga$ is cubic,
$$\tg=\frac{1}{108}(1+\frac{2}{v})^2 + \frac{1}{v} \tr(\plm).$$
\end{theorem}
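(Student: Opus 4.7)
The plan is to assemble the result by plugging the two intermediate formulas we just proved into Proposition~\ref{proptau}. Since the hypotheses of \thmref{thm special2} (namely $r$-regular, normalized, with $\li = L_j$ and $\ri = R_j$ for all $i,j$, and no multiple edges, bridges, or self loops) are exactly the hypotheses required for both \corref{cor special} and \lemref{lem special}, every ingredient is available.

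First I would fix any vertex $p \in \vv{\ga}$ and write the starting point from \propref{proptau}:
\begin{equation*}
\tg = \frac{1}{12} \sum_{e_i \in \ee{\ga}} \frac{\li^3}{(\li+\ri)^2} + \frac{1}{4} \sum_{e_i \in \ee{\ga}} \frac{\li(R_{a_i,p}-R_{b_i,p})^2}{(\li+\ri)^2}.
\end{equation*}
Note the absence of bridges guarantees each $\ri$ is finite, so every summand of Proposition~\ref{proptau} takes its ``generic'' form rather than the limiting $3\li$ replacement. Next I would substitute the closed expression from \corref{cor special} for the first sum, obtaining $\frac{1}{12}\bigl(1 - \frac{2(v-1)}{rv}\bigr)^2$ for the first block.

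For the second block, the absence of multiple edges (together with $r$-regularity and the equal-length/equal-resistance hypotheses) is precisely what \lemref{lem special} requires, so I may substitute $\sum_{e_i \in \ee{\ga}} \frac{\li(R_{b_i,p}-R_{a_i,p})^2}{(\li+\ri)^2} = \frac{4}{v}\tr(\plm)$. The factor $\frac{1}{4}$ outside then cancels the $4$ in the numerator, yielding $\frac{1}{v}\tr(\plm)$. Summing the two pieces gives the claimed general formula.

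For the cubic case $r=3$, I would either specialize $r=3$ directly in the expression $(1-\frac{2(v-1)}{rv})^2$, which simplifies to $\frac{1}{9}(1+\frac{2}{v})^2$, or simply invoke \eqnref{eqn cubic case} in place of \corref{cor special}. Dividing by $12$ then turns $\frac{1}{9}$ into $\frac{1}{108}$, producing the stated cubic formula. There is no genuine obstacle here; the only point one must verify carefully is that every hypothesis needed by the cited results is built into the statement of \thmref{thm special2}, so the substitution step is legitimate.
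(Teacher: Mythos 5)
Your proposal is correct and matches the paper's own (implicitly stated) argument exactly: the paper derives this theorem precisely by substituting the first-sum formula from \corref{cor special} and the second-sum formula from \lemref{lem special} into \propref{proptau}, with \eqnref{eqn cubic case} handling the cubic specialization. The arithmetic in both steps checks out, so nothing further is needed.
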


\thmref{thm special2} gives specific conditions for a metrized graph $\ga$ to have simplified $\tg$ formulas. \lemref{lem special2} gives connections between such conditions and various other criteria.
\begin{lemma}\label{lem special2}
Let $\ga$ be a $r$-regular metrized graph with $\li=L_j$ for every edges $e_i$ and $e_j$ in $\ee{\ga}$. Suppose that $e_i$ has end points $p_i$ and $q_i$, and that $e_j$ has end points $p_j$ and $q_j$. Set
\begin{enumerate}
\item $\ri=R_j$ for every edges $e_i$ and $e_j$.
\item $r(p_i,q_i)=r(p_j,q_j)$ for every edges $e_i$ and $e_j$.
\item $l_{pp}^+ = \frac{1}{v} tr(\plm)$ for any vertex $p$ in $\ga$, and $l_{p_i q_i}^+ = l_{p_j q_j}^+$ for every edges $e_i$ and $e_j$. 
\item $\ga$ is $r$-regular and $r(p_i,q_i)=r(p_j,q_j)$ for every edges $e_i$ and $e_j$.
\item For any two vertices $p$ and $q$, $\sum_{s \in \vv{\ga}}r(s,p)=\sum_{s \in \vv{\ga}}r(s,q)$, and
$\sum_{s \in \vv{\ga}}j_s(p_i,q_i)=\sum_{s \in \vv{\ga}}j_s(p_j,q_j)$ for every edges $e_i$ and $e_j$.
\end{enumerate}
Then we have
$$ (4) \Rightarrow (3) \Rightarrow (2) \Leftrightarrow (1), \quad \text{and} \quad (5) \Leftrightarrow (3).$$
\end{lemma}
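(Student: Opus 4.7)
The plan is to reduce each implication to one of the identities already recorded in \secref{sec tau constant}. The equivalence $(1)\Leftrightarrow(2)$ is immediate: under the standing hypothesis $\li=L_j=:L$, the parallel-circuit formula \eqnref{eqn edge resistance} reads $r(\pp,\qq)=\frac{L\ri}{L+\ri}$, which is strictly monotonic in $\ri\in(0,\infty]$, so the constancy of $r(\pp,\qq)$ over edges is equivalent to the constancy of $\ri$. For $(3)\Rightarrow(2)$, substitute the first clause $l^+_{\pp\pp}=l^+_{\qq\qq}=\frac{1}{v}\tr(\plm)$ of (3) into \lemref{lem disc2} to get $r(\pp,\qq)=\frac{2}{v}\tr(\plm)-2 l^+_{\pp\qq}$; the right-hand side is constant over edges by the second clause of (3).

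For $(4)\Rightarrow(3)$, observe that (4) combines the standing $r$-regularity with statement (2), which is the same as $\ri=R_j$ for all edges by $(1)\Leftrightarrow(2)$. These are exactly the hypotheses of \lemref{lem special}, whose conclusion $l^+_{pp}=\frac{1}{v}\tr(\plm)$ then holds at every vertex $p$. The ``normalized'' qualifier in that lemma is harmless because rescaling every edge length by a common factor $M$ rescales $\plm$ by $M$, so the equation $l^+_{pp}=\frac{1}{v}\tr(\plm)$ is scale-invariant; the ``no multiple edges'' hypothesis is built into having an adequate vertex set for $\lm$. Combining this with \lemref{lem disc2} and the constancy of $r(\pp,\qq)$ gives $l^+_{\pp\qq}=\frac{1}{v}\tr(\plm)-\frac12 r(\pp,\qq)$, which is independent of the edge, completing the second clause of (3).

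For $(5)\Leftrightarrow(3)$, I read off both halves from the two trace identities. By \eqnref{eqn res trace}, $\sum_{s\in\vv{\ga}}r(s,p)=v\,l^+_{pp}+\tr(\plm)$, so the first clause of (5) is equivalent to $l^+_{pp}$ being constant in $p$; since $\sum_{p\in\vv{\ga}}l^+_{pp}=\tr(\plm)$ by definition of the trace, that common value must equal $\frac{1}{v}\tr(\plm)$, which is the first clause of (3). Analogously, \eqnref{eqn vol trace} gives $\sum_{s}j_s(\pp,\qq)=\tr(\plm)+v\,l^+_{\pp\qq}$, so the second clause of (5) is equivalent to $l^+_{\pp\qq}$ being constant over edges, matching the second clause of (3).

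The only non-bookkeeping step is $(4)\Rightarrow(3)$, and all of its content has already been absorbed into \lemref{lem special}: the crucial collapse there is that $r$-regularity together with the constancy of $\frac{\ri}{\li+\ri}$ turns $\sum_{e_i\in\ee{\ga}}(l^+_{p\pp}+l^+_{p\qq})$ into $r\sum_{q\in\vv{\ga}}l^+_{pq}$, which vanishes by the double-centering of $\plm$ from \remref{rem doubcent1}. Thus the main thing to be careful about when writing out the proof is verifying that the ``normalized'' and ``no multiple edges'' clauses of \lemref{lem special} transfer to the present statement, after which every other step is a one-line substitution.
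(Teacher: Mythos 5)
Your proof is correct and follows essentially the same route as the paper: $(1)\Leftrightarrow(2)$ via \eqnref{eqn edge resistance}, $(3)\Rightarrow(2)$ via \lemref{lem disc2}, $(4)\Rightarrow(3)$ via \lemref{lem special} combined with \lemref{lem disc2}, and $(5)\Leftrightarrow(3)$ via \eqnref{eqn res trace} and \eqnref{eqn vol trace}. Your explicit remarks on transferring the ``normalized'' and ``no multiple edges'' hypotheses of \lemref{lem special}, and on why the constant diagonal value must equal $\frac{1}{v}\tr(\plm)$, are points the paper passes over silently, but they do not change the argument.
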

\begin{proof}
Since $r(p_i,q_i)=\frac{\li \ri}{\li+\ri}$, $r(p_j,q_j)=\frac{L_j R_j}{L_j+R_j}$ by \eqnref{eqn edge resistance}, $(2) \Leftrightarrow (1)$ follows from the assumption that $\li = L_j$ for every edges $e_i$ and $e_j$.

$(3) \Rightarrow (2)$ follows from \lemref{lem disc2}.

Suppose $\ga$ is $r$-regular and $r(p_i,q_i)=r(p_j,q_j)$ for every edges $e_i$ and $e_j$. Then we use $(2) \Rightarrow (1)$ and \lemref{lem special} to conclude that $l_{pp}^+ = \frac{1}{v} tr(\plm)$ for any vertex $p$ in $\ga$. On the other hand,
$r(p_i,q_i)=r(p_j,q_j)$ for any two edges $e_i$ and $e_j$ implies $l_{p_i p_i}^+ -2 l_{p_i q_i}^+ + l_{q_i q_i}^+ = l_{p_j p_j}^+ -2 l_{p_j q_j}^+ + l_{q_j q_j}^+$ for any two edges $e_i$ and $e_j$ by \lemref{lem disc2}.
This equality and the fact that $l_{pp}^+ = \frac{1}{v} tr(\plm)$ for any vertex $p$ in $\ga$ give us $l_{p_i q_i}^+ = l_{p_j q_j}^+$ for every edges $e_i$ and $e_j$. Hence, we obtain $(4) \Rightarrow (3)$.


It follows from Equations (\ref{eqn res trace}) that
$\sum_{s \in \vv{\ga}}r(s,p)=\sum_{s \in \vv{\ga}}r(s,q)$ iff $l_{p p}^+ = l_{q q}^+ $ for any two vertices $p$ and $q$.
%
%
Similarly, \eqnref{eqn vol trace} implies that
$\sum_{s \in \vv{\ga}}j_s(p_i,q_i)=\sum_{s \in \vv{\ga}}j_s(p_j,q_j)$ iff $l_{p_i q_i}^+ = l_{p_j q_j}^+ $ for every edges $e_i$ and $e_j$.
This proves $(5) \Leftrightarrow (3)$.

\end{proof}

We note that $\tg \geq \frac{\elg}{108}$ if the edge connectivity of $\ga$ is at least $6$ (see \cite[Theorem 6.10]{C5}). Moreover, we know the following upper and lower bounds to the tau constant of a $r$-regular normalized metrized graph $\ga$ with equal edge length and edge connectivity $r$ (see \cite[Theorem 6.11]{C5}):
\begin{equation}\label{eqn tau bounds}
\begin{split}
\frac{1}{12}-\frac{(v-1)(r-2)}{3 v r^2}\geq \tg \geq
\frac{1}{12}-\frac{(v-1)((r-1)v^2-5v+6)}{3r^2 v^3}.
\end{split}
\end{equation}
In particular, if $\ga$ is a $3$-regular normalized metrized graph with equal edge length and edge connectivity $3$, we have the following lower bound:
\begin{equation}\label{eqn tau bounds2}
\begin{split}
 \tg \geq \frac{1}{108}+\frac{7v^2-11v+6}{27 v^3}.
\end{split}
\end{equation}
So far in this section, we interpreted various results on $\tg$ in regards to their implications for having small tau constants. By gathering these interpretations, we expected that a metrized graph having small tau constant should satisfy various properties some of which are very strong. Then we worked with such metrized graphs and derived simplified formulas for their tau constants. However, we have not seen any example of such metrized graphs yet. Next, we give an example to show that we indeed have such special type of metrized graphs.

\begin{example}
Let $\ga$ be a normalized metrized graph which is a complete graph on $v$ vertices. Suppose $\li=L_j$ for every edges $e_i$ and $e_j$ in $\ee{\ga}$. Since $\ga$ is a complete graph with $v$ vertices, it is a regular graph with $\va(p)=v-1$ for each vertex $p$ and that it has $e=\frac{v (v-1)}{2}$ edges. Because of the symmetries of $\ga$, we have $r(p_i,q_i)=r(p_j,q_j)$ and so $\ri = R_j$ by \lemref{lem special2} for each edge $e_i$ and $e_j$. Then we use \thmref{cor special} to see $r(p_i,q_i)=\frac{\li \ri}{\li + \ri}=\frac{1}{e}\frac{2(v-1)}{r v}=\frac{4}{v^2(v-1)}$. Since any two vertices are connected by an edge, we have $r(p,q)=\frac{4}{v^2(v-1)}$ for any two vertices $p$ and $q$. Therefore, $\sum_{p, \, q \, \in \vv{\ga}} r(p,q) = (v^2-v) \frac{4}{v^2(v-1)}=\frac{4}{v}.$

On the other hand, $\sum_{p, \, q \, \in \vv{\ga}} r(p,q) = 2v \tr (L^+)$ by the second equality in Equations (\ref{eqn res trace}).
Therefore,  $2v \tr(\plm)=\frac{4}{v}$, so $\tr(\lm)=\frac{2}{v^2}$.
Finally, we use \thmref{thm special2} with $r=v-1$ to derive
$$\tg = \frac{1}{12}(1-\frac{2}{v})^2+\frac{2}{v^3}.$$
\end{example}
Our findings in the example above are consistent with \cite[Proposition 2.16]{C2}, but we used a different method in this case.

Although a complete graph with equal edge lengths have various symmetry properties that we are looking for, it has the highest regularity among the graphs without self loops and multiple edges and having $v$ vertices. Thus we don't expect to have small tau constants for such graphs as \corref{cor special} suggests. In the rest of the paper, we focus on cubic graphs with various symmetries.

\section{Hexagonal Nets Around a Torus}\label{sec torus}
\vskip .1 in

In this section, we consider the metrized graph $\ga=\mathrm{H}(n,m)$ which is the hexagonal net around a torus.
This is obtained as follows:
\begin{itemize}
  \item Make regular hexagonal tessellation of the plane.
  \item Take a region from this tessellation containing $n$ horizontal and $m$ vertical hexagonal cells as shown in \figref{fig Hnm}.
  \item Connect the vertical vertices on the boundaries by edges. Namely, there will be edges with the following pairs of end points
  $$(a_1, b_1), \, (a_2, b_2), \, (a_3, b_3), \, \cdots , \, (a_{m+1}, b_{m+1}).$$
  \item Connect the horizontal vertices on the boundaries by edges. Namely, there will be edges with the following pairs of points
  $$(a_1, d_1), \, (c_1, d_2), \, (c_2, d_3), \, \cdots , \, (c_{n-1}, d_{n}), \, (c_n, b_{m+1}).$$
\end{itemize}

\begin{figure}
\centering
\includegraphics[scale=0.9]{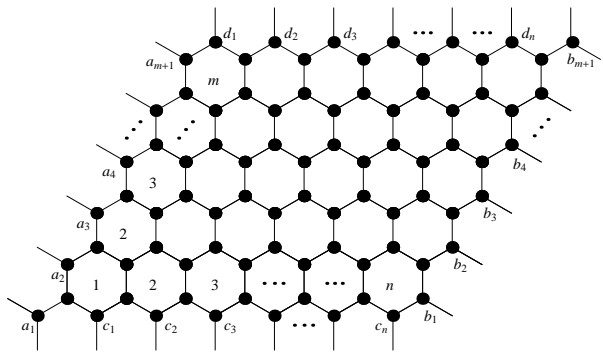} \caption{Hexagonal lattice with $n$ horizantal and $m$ vertical hexagons to form $H(n,m)$. } \label{fig Hnm}
\end{figure}

This graph is also known as hexagonal torus, honeycomb, toroidal fullerenes, toroidal polyhex or toroidal $6$-cage in the literature.
Here, we assume that each edge length in $\mathrm{H}(n,m)$ is equal to $1$ first. Then we work with its normalization.

We give the adjacency matrix of $\mathrm{H}(n,m)$, $\mathrm{A}(\mathrm{H}(n,m))$, for a better description.
We follow \cite[Section 2.4]{YZ} for the definition of the adjacency matrix.

Let $A \otimes B$ is the tensor product of the matrices $A$ and $B$.
Let $\mathrm{C}_n$ be the cycle metrized graph with $n$ vertices.
For the matrix $\mathrm{B}_n=(b_{ij})_{n \times n}$, where
\begin{equation*}\label{eqn matrix B}
b_{ij}=\begin{cases}1, &  \, \text{if } \, (i,j) \in \{ (1,2), (2,3), \cdots, (n-1,n),(n,1) \}, \\
0, &  \, \text{otherwise},
\end{cases}
\end{equation*}
we can express the adjacency matrix of $\mathrm{C}_n$ by using $\mathrm{B}_n$. Namely, $\mathrm{A}(\mathrm{C}_n)=\mathrm{B}_n+\mathrm{B}^T_n$.

For the matrix $\mathrm{F}_{2m}=(f_{ij})_{2m \times 2m}$, where
\begin{equation*}\label{eqn matrix F}
f_{ij}=\begin{cases}1, &  \, \text{if } \, (i,j) \in \{ (2,1), (4,3), \cdots, (2m,2m-1) \}, \\
0, &  \, \text{otherwise},
\end{cases}
\end{equation*}
we define the adjacency matrix of $\mathrm{H}(n,m)$ as follows:
$$\mathrm{A}(\mathrm{H}(n,m))=\mathrm{I}_{n+1} \otimes \mathrm{A}(\mathrm{C}_{2m+2})+ \mathrm{B}_{n+1} \otimes \mathrm{F}_{2m+2}+\mathrm{B}^T_{n+1} \otimes \mathrm{F}^T_{2m+2}.$$

Let $\mathrm{D}_{n,m}= \diag(3,3,\cdots, 3)$ be a diagonal matrix of size $2(n+1)(m+1) \times 2(n+1)(m+1)$. Then the discrete Laplacian matrix of $\mathrm{H}(n,m)$ is defined to be $$\mathrm{L}(\mathrm{H}(n,m))= \mathrm{A}(\mathrm{H}(n,m))-\mathrm{D}_{n,m}$$.

For example, the discrete Laplacian matrix of $\mathrm{H}(2,1)$ is as follows:

\footnotesize
$$\mathrm{L}(\mathrm{H}(2,1))=
\left(
\begin{array}{cccccccccccc}
 3 & -1 & 0 & -1 & 0 & 0 & 0 & 0 & 0 & -1 & 0 & 0 \\
 -1 & 3 & -1 & 0 & -1 & 0 & 0 & 0 & 0 & 0 & 0 & 0 \\
 0 & -1 & 3 & -1 & 0 & 0 & 0 & 0 & 0 & 0 & 0 & -1 \\
 -1 & 0 & -1 & 3 & 0 & 0 & -1 & 0 & 0 & 0 & 0 & 0 \\
 0 & -1 & 0 & 0 & 3 & -1 & 0 & -1 & 0 & 0 & 0 & 0 \\
 0 & 0 & 0 & 0 & -1 & 3 & -1 & 0 & -1 & 0 & 0 & 0 \\
 0 & 0 & 0 & -1 & 0 & -1 & 3 & -1 & 0 & 0 & 0 & 0 \\
 0 & 0 & 0 & 0 & -1 & 0 & -1 & 3 & 0 & 0 & -1 & 0 \\
 0 & 0 & 0 & 0 & 0 & -1 & 0 & 0 & 3 & -1 & 0 & -1 \\
 -1 & 0 & 0 & 0 & 0 & 0 & 0 & 0 & -1 & 3 & -1 & 0 \\
 0 & 0 & 0 & 0 & 0 & 0 & 0 & -1 & 0 & -1 & 3 & -1 \\
 0 & 0 & -1 & 0 & 0 & 0 & 0 & 0 & -1 & 0 & -1 & 3 \\
\end{array}
\right)_{12 \times 12}.$$
\normalsize

The metrized graph $\mathrm{H}(n,m)$ is a cubic metrized graph having $v=2(n+1)(m+1)$ vertices and $e=3(n+1)(m+1)$ edges, so it has $g=(n+1)(m+1)+1$.

Our motivation to consider the metrized graphs $\mathrm{H}(n,m)$ is that it satisfies the specific conditions described in \secref{sec symmetry}:
\begin{remark}\label{rem hexagon symmetry}
Because of the symmetries of the metrized graph $\mathrm{H}(n,n)$ (when $m=n$), we have the equality $r(p_i, q_i)=r(p_j, q_j)$ for each edges $e_i$ and $e_j$ in $\mathrm{H}(n,n)$.
\end{remark}

The eigenvalues of the matrix $\mathrm{L}(\mathrm{H}(n,m))$ are known to be the following values (see \cite[Section 6.2]{SW}, \cite[Section 2.4]{YZ} or \cite[Section 4]{Y})
$$\lambda_{i,j,k}=3 +(-1)^k \sqrt{3+2 \cos{\frac{2 \pi i}{n+1}}+2 \cos{\frac{2 \pi j}{m+1}}+2 \cos{\big(\frac{2 \pi i}{n+1}+\frac{2 \pi j}{m+1}\big)}},$$
where $i$, $j$ and $k$ are integers such that $0 \leq i \leq n$, $0\leq j \leq m$,  $0\leq k \leq 1$.

For general theory of toroidal fullerenes, one can see \cite[Part II]{JS}.

We know that $0$ is an eigenvalue of both $\lm$ and $\plm$ with multiplicity $1$ for a connected graph. The other eigenvalues of $\plm$
are of the form $\frac{1}{\lambda}$, where $\lambda$ is a nonzero eigenvalue of $\lm$.

Note that $\lambda_{0,0,1}=0$, $\lambda_{0,0,0}=6$ and whenever $(i,j) \neq (0,0)$ we have
$$\frac{1}{\lambda_{i,j,0}}+\frac{1}{\lambda_{i,j,1}} = \frac{3}{3-\cos{\frac{2 \pi i}{n+1}}- \cos{\frac{2 \pi j}{m+1}}- \cos{\big(\frac{2 \pi i}{n+1}+\frac{2 \pi j}{m+1}\big)}}.$$
Thus, we have the following equality for the pseudo inverse $\plm$ of $\mathrm{H}(n,m)$:
\begin{equation}\label{eqn trace L pseudo}
\begin{split}
\tr(\plm) = \frac{1}{6}+\sum_{i=0}^n\sum_{j=0}^m \frac{3}{3-\cos{\frac{2 \pi i}{n+1}}- \cos{\frac{2 \pi j}{m+1}}- \cos{\big(\frac{2 \pi i}{n+1}+\frac{2 \pi j}{m+1}\big)}},
\end{split}
\end{equation}
where $(i,j)\neq (0,0)$.

If $i=0$, we have
$$3-\cos{\frac{2 \pi i}{n}}- \cos{\frac{2 \pi j}{n}}- \cos{\big(\frac{2 \pi (i+j)}{n}\big)}=2-2 \cos{\frac{2 \pi j}{n}}=4 \sin^2{\frac{ \pi j}{n}}.$$
Now, we note the following equality (see \cite[Equation 5.2 and Corollary 5.2]{BY}, \cite[page 644]{PBM}):
\begin{equation}\label{eqn finite trig sum}
\begin{split}
\sum_{j=1}^{n-1}  \csc^2{\frac{ \pi j}{n}} = \frac{n^2-1}{3}.
\end{split}
\end{equation}
Using the fact that $\csc^2{x}=1+cot^2{x}$, we see that \eqnref{eqn finite trig sum} can be obtained from
\begin{equation}\label{eqn finite trig sum2}
\begin{split}
\sum_{j=1}^{n-1}  \cot^2{\frac{ \pi j}{n}} = \frac{(n-1)(n-2)}{3},
\end{split}
\end{equation}
which can be found in \cite[page 7]{G}, \cite[Equation 1.1]{BY} and the references therein.

Whenever $m=n$, we use \eqnref{eqn finite trig sum} to rewrite \eqnref{eqn trace L pseudo} as follows:
\begin{equation}\label{eqn trace L pseudo2}
\begin{split}
\tr(\plm) = \frac{1}{6}+\frac{n(n+2)}{2}+\sum_{i=1}^n\sum_{j=1}^n \frac{3}{3-\cos{\frac{2 \pi i}{n+1}}- \cos{\frac{2 \pi j}{n+1}}- \cos{\frac{2 \pi (i+j)}{n+1}}}.
\end{split}
\end{equation}

Next, we compute the tau constant of $\mathrm{H}^N(n,n)$:
\begin{theorem}\label{thm honeycomb}
Suppose $n \geq 3$. We have
$$\tau(\mathrm{H}^N(n-1,n-1))=\frac{n^4+11 n^2 -5}{108 n^4}+\frac{1}{2 n^4 } \sum_{i=1}^{n-1}\sum_{j=1}^{n-1} \frac{1}{3-\cos{\frac{2 \pi i}{n}}- \cos{\frac{2 \pi j}{n}}- \cos{\big(\frac{2 \pi (i+j)}{n}\big)}}.$$
\end{theorem}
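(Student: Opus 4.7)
The plan is to apply the cubic case of Theorem \ref{thm special2} to the normalization of $\mathrm{H}(n-1,n-1)$, and then convert the trace of the pseudo-inverse into the stated double sum using the eigenvalue formula \eqref{eqn trace L pseudo2} combined with the closed-form trigonometric identity \eqref{eqn finite trig sum}.

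First I would verify that $\mathrm{H}(n-1,n-1)$ (with every edge of length $1$) satisfies the hypotheses of Theorem \ref{thm special2}. The graph is cubic with $v=2n^{2}$ vertices, $e=3n^{2}$ edges, and no multiple edges, bridges or self loops. By Remark \ref{rem hexagon symmetry} the square case gives $r(p_i,q_i)=r(p_j,q_j)$ for every pair of edges, so implication $(4)\Rightarrow(3)\Rightarrow(2)\Leftrightarrow(1)$ of Lemma \ref{lem special2} yields $R_i=R_j$ for every pair of edges. Hence the normalization $\mathrm{H}^{N}(n-1,n-1)$ satisfies the hypotheses of the cubic form of Theorem \ref{thm special2}, giving
\[
\tau(\mathrm{H}^{N}(n-1,n-1))=\frac{1}{108}\Bigl(1+\frac{2}{v}\Bigr)^{2}+\frac{1}{v}\,\tr(\mathrm{L}^{+}_{N}),
\]
where $\mathrm{L}^{+}_{N}$ denotes the pseudo-inverse of the discrete Laplacian of the normalized graph.

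Next I would compute $\tr(\mathrm{L}^{+}_{N})$ by rescaling. Since the normalization multiplies every edge length by $\frac{1}{3n^{2}}$, the Laplacian scales by $3n^{2}$ and the pseudo-inverse by $\frac{1}{3n^{2}}$, so $\tr(\mathrm{L}^{+}_{N})=\frac{1}{3n^{2}}\tr(\mathrm{L}^{+})$ where $\mathrm{L}^{+}$ is the pseudo-inverse of the Laplacian with unit edge lengths. Substituting $(n,m)\mapsto(n-1,n-1)$ in \eqref{eqn trace L pseudo2} and using the telescoping sums produced by \eqref{eqn finite trig sum} for the boundary rows $i=0$ and $j=0$ gives
\[
\tr(\mathrm{L}^{+})=\frac{1}{6}+\frac{n^{2}-1}{2}+3\sum_{i=1}^{n-1}\sum_{j=1}^{n-1}\frac{1}{3-\cos\frac{2\pi i}{n}-\cos\frac{2\pi j}{n}-\cos\frac{2\pi(i+j)}{n}}.
\]

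Finally, I would assemble the two pieces. Plugging $v=2n^{2}$ gives $\frac{1}{108}(1+\frac{2}{v})^{2}=\frac{(n^{2}+1)^{2}}{108n^{4}}$, while $\frac{1}{v}\tr(\mathrm{L}^{+}_{N})=\frac{1}{6n^{4}}\tr(\mathrm{L}^{+})$. Adding these and combining the three rational terms over the common denominator $108n^{4}$ yields the numerator $n^{4}+2n^{2}+1+3+9(n^{2}-1)=n^{4}+11n^{2}-5$, while the double sum carries a coefficient $\frac{3}{6n^{4}}\cdot 1 = \frac{1}{2n^{4}}$, producing the claimed identity. The main obstacle is not conceptual but bookkeeping: one must track the scaling factor $\frac{1}{3n^2}$ through the trace, correctly extract the boundary contributions from \eqref{eqn trace L pseudo} via the identity $\csc^2=1+\cot^2$ and \eqref{eqn finite trig sum}, and verify that the degenerate contributions from $i=0$, $j=0$ (but not $(i,j)=(0,0)$) each contribute exactly $\frac{n^{2}-1}{4}$ so that the sums telescope cleanly into $\frac{n^{2}-1}{2}$.
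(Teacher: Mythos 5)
Your proposal is correct and follows essentially the same route as the paper: apply the cubic case of Theorem \ref{thm special2} (justified via Remark \ref{rem hexagon symmetry} and Lemma \ref{lem special2}), compute $\tr(\plm)$ for the normalized graph by rescaling Equation (\ref{eqn trace L pseudo2}) by $\frac{1}{3n^2}$, and combine. All of your intermediate quantities --- the boundary contributions $\frac{n^2-1}{4}$ from each of $i=0$ and $j=0$, the coefficient $\frac{1}{2n^4}$ on the double sum, and the numerator $n^4+11n^2-5$ --- match the paper's computation.
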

\begin{proof}
The edge lengths are chosen to be $1$ for $\mathrm{H}(n-1,n-1)$, which has $v=2n^2$ vertices, $e=3n^2$ edges and genus $g=n^2-1$.
Since $\ell(\mathrm{H}(n-1,n-1))=3n^2$, we obtain $\mathrm{H}^N(n-1,n-1)$ by dividing each edge length in $\mathrm{H}(n-1,n-1)$ by $3n^2$.

Let $\plm$ be the pseudo inverse of the discrete Laplacian of the normalized metrized graph $\mathrm{H}^N(n-1,n-1)$.
Now, by using \eqnref{eqn trace L pseudo2} we derive
\begin{equation}\label{eqn trace L pseudo3}
\begin{split}
\tr(\plm) = \frac{1}{18 n^2}+\frac{n^2-1}{6 n^2}+\frac{1}{n^2}\sum_{i=1}^{n-1}\sum_{j=1}^{n-1} \frac{1}{3-\cos{\frac{2 \pi i}{n}}- \cos{\frac{2 \pi j}{n}}- \cos{\frac{2 \pi (i+j)}{n} }}.
\end{split}
\end{equation}
On the other hand, we use \remref{rem hexagon symmetry} and \thmref{thm special2} to derive
\begin{equation}\label{eqn tau formula of HT}
\begin{split}
\tau(\mathrm{H}^N(n-1,n-1))=\frac{1}{108}(1+\frac{1}{n^2})^2 + \frac{1}{2n^2} \tr(\plm).
\end{split}
\end{equation}
Hence, the proof of the theorem follows from this equality and \eqnref{eqn trace L pseudo3}.
\end{proof}

\begin{lemma}\label{lem sum inequality}
For every integer $n \geq 2$, we have
$$\frac{(n-1)^2}{6} \leq \sum_{i=1}^{n-1}\sum_{j=1}^{n-1} \frac{1}{3-\cos{\frac{2 \pi i}{n}}- \cos{\frac{2 \pi j}{n}}- \cos{\frac{2 \pi (i+j)}{n} }} \leq \frac{(n+1)(n-1)^2}{6}.
$$
\end{lemma}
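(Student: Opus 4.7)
The plan is to rewrite the denominator in a more tractable form and then read off both bounds immediately. Using the elementary identity $1 - \cos x = 2\sin^2(x/2)$ applied to each of $\cos(2\pi i/n)$, $\cos(2\pi j/n)$, $\cos(2\pi(i+j)/n)$, one obtains
$$3 - \cos\tfrac{2\pi i}{n} - \cos\tfrac{2\pi j}{n} - \cos\tfrac{2\pi(i+j)}{n} = 2\sin^2\tfrac{\pi i}{n} + 2\sin^2\tfrac{\pi j}{n} + 2\sin^2\tfrac{\pi(i+j)}{n}.$$
With this representation, the denominator in the sum is a sum of three non-negative squared sines, and both bounds will be purely algebraic consequences.

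For the lower bound, I would use that each $\sin^2$ is at most $1$, so the denominator is at most $6$ and hence each summand is at least $1/6$. Since there are exactly $(n-1)^2$ summands, this gives $S(n) \geq (n-1)^2/6$ at once.

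For the upper bound, I would discard the last two sine-squared terms and use just $D(i,j) \geq 2\sin^2(\pi i/n)$, which is strictly positive for $1 \leq i \leq n-1$. This bound is independent of $j$, so summing over $j$ contributes a factor $(n-1)$, giving
$$\sum_{i=1}^{n-1}\sum_{j=1}^{n-1} \frac{1}{D(i,j)} \leq \frac{n-1}{2}\sum_{i=1}^{n-1} \csc^2\tfrac{\pi i}{n}.$$
The remaining one-dimensional sum is evaluated by the identity \eqnref{eqn finite trig sum}, namely $\sum_{i=1}^{n-1}\csc^2(\pi i/n) = (n^2-1)/3$. Substituting yields $\frac{(n-1)(n^2-1)}{6} = \frac{(n+1)(n-1)^2}{6}$, exactly as claimed.

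There is no real obstacle here: the work is entirely in noticing the $1-\cos = 2\sin^2$ rewrite, after which the two bounds come from the trivial estimates $0 \leq \sin^2 \leq 1$ combined with the closed-form trigonometric identity already quoted in the excerpt as \eqnref{eqn finite trig sum}. If desired, the same discard-and-sum argument could be carried out for any of the three sine-squared terms individually, producing three proofs of the same upper bound; the symmetry suggests the bound is off by roughly a factor of three from the true asymptotic size of $S(n)$, but it is nevertheless sufficient for the lemma's purpose.
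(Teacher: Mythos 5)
Your proof is correct and follows essentially the same route as the paper: the identity $3-\cos\frac{2\pi i}{n}-\cos\frac{2\pi j}{n}-\cos\frac{2\pi(i+j)}{n}=2\bigl[\sin^2\frac{\pi i}{n}+\sin^2\frac{\pi j}{n}+\sin^2\frac{\pi(i+j)}{n}\bigr]$, the trivial bound $\sin^2\frac{\pi i}{n}\leq \sin^2\frac{\pi i}{n}+\sin^2\frac{\pi j}{n}+\sin^2\frac{\pi(i+j)}{n}\leq 3$, and the evaluation $\sum_{i=1}^{n-1}\csc^2\frac{\pi i}{n}=\frac{n^2-1}{3}$ are exactly the three steps in the paper's argument. (Only your closing aside about the upper bound being off by ``a factor of three'' is inaccurate --- it is off by a factor of order $n$ --- but that remark plays no role in the proof.)
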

\begin{proof}

\begin{equation}\label{eqn sin cos1}
\begin{split}
3-\cos{\frac{2 \pi i}{n}}- \cos{\frac{2 \pi j}{n}}- \cos{\frac{2 \pi (i+j)}{n} } = 2 \Big[ \sin^2\frac{\pi i}{n} + \sin^2 \frac{\pi j}{n}+ \sin^2 \frac{\pi (i+j)}{n}  \Big]
\end{split}
\end{equation}

and
$$
\sin^2\frac{\pi i}{n} \leq \sin^2\frac{\pi i}{n} + \sin^2 \frac{\pi j}{n}+ \sin^2 \frac{\pi (i+j)}{n} \leq 3
$$
Thus,
$$
\sum_{i=1}^{n-1}\sum_{j=1}^{n-1} \frac{1}{3} \leq \sum_{i=1}^{n-1}\sum_{j=1}^{n-1} \frac{1}{\sin^2\frac{\pi i}{n} + \sin^2 \frac{\pi j}{n}+ \sin^2 \frac{\pi (i+j)}{n}} \leq \sum_{i=1}^{n-1}\sum_{j=1}^{n-1} \frac{1}{\sin^2\frac{\pi i}{n}}
$$
Using \eqnref{eqn finite trig sum},
\begin{equation}\label{eqn sin cos2}
\begin{split}
\frac{(n-1)^2}{3} \leq  \sum_{i=1}^{n-1}\sum_{j=1}^{n-1} \frac{1}{\sin^2\frac{\pi i}{n} + \sin^2 \frac{\pi j}{n}+ \sin^2 \frac{\pi (i+j)}{n}} \leq \frac{(n+1)(n-1)^2}{3}
\end{split}
\end{equation}

Hence, the proof follows from \eqnref{eqn sin cos1} and \eqnref{eqn sin cos2}.

\end{proof}

\begin{theorem}\label{thm main}
We have
$$
\frac{n^4+20 n^2-18n+4}{108 n^4} \leq  \tau(\mathrm{H}^N(n-1,n-1))  \leq \frac{n^4+9 n^3+2 n^2-9n+4}{108 n^4}.
$$
In particular,
$\tau(\mathrm{H}^N(n,n)) \rightarrow \frac{1}{108}$ as $n\rightarrow \infty $.
\end{theorem}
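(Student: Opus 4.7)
The plan is to combine \thmref{thm honeycomb} with \lemref{lem sum inequality} in the most direct way possible; the two results were essentially designed to fit together. First I would recall the exact expression for $\tau(\mathrm{H}^N(n-1,n-1))$ from \thmref{thm honeycomb}:
\begin{equation*}
\tau(\mathrm{H}^N(n-1,n-1))=\frac{n^4+11 n^2 -5}{108 n^4}+\frac{1}{2 n^4 }\, S(n),
\end{equation*}
where $S(n)$ denotes the double trigonometric sum
\[
S(n) \;=\; \sum_{i=1}^{n-1}\sum_{j=1}^{n-1} \frac{1}{3-\cos{\frac{2 \pi i}{n}}- \cos{\frac{2 \pi j}{n}}- \cos{\frac{2 \pi (i+j)}{n}}}.
\]

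Next I would plug the two-sided bound on $S(n)$ supplied by \lemref{lem sum inequality}, namely $\frac{(n-1)^2}{6} \leq S(n) \leq \frac{(n+1)(n-1)^2}{6}$, into the above identity. For the lower bound this yields
\[
\tau(\mathrm{H}^N(n-1,n-1)) \;\geq\; \frac{n^4+11 n^2 -5}{108 n^4}+\frac{(n-1)^2}{12 n^4} \;=\; \frac{n^4+11n^2-5+9(n-1)^2}{108 n^4},
\]
and expanding $9(n-1)^2=9n^2-18n+9$ collapses the numerator to $n^4+20n^2-18n+4$, which is exactly the left-hand bound in the statement. For the upper bound, the same substitution gives
\[
\tau(\mathrm{H}^N(n-1,n-1)) \;\leq\; \frac{n^4+11n^2-5+9(n+1)(n-1)^2}{108n^4},
\]
and expanding $9(n+1)(n-1)^2 = 9n^3-9n^2-9n+9$ turns the numerator into $n^4+9n^3+2n^2-9n+4$, matching the right-hand bound.

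For the asymptotic claim, both numerators are polynomials in $n$ of degree $4$ with leading coefficient $1$, while the denominator is $108n^4$, so both bounding ratios tend to $\frac{1}{108}$ as $n\to\infty$. A reindexing (replacing $n-1$ by $n$) immediately gives $\tau(\mathrm{H}^N(n,n))\to \frac{1}{108}$. There is really no obstacle here beyond bookkeeping; the essential analytic work was carried out in \lemref{lem sum inequality}, where the key inequality $\sin^2\frac{\pi i}{n}\leq \sin^2\frac{\pi i}{n}+\sin^2\frac{\pi j}{n}+\sin^2\frac{\pi (i+j)}{n}\leq 3$ combined with the classical identity $\sum_{j=1}^{n-1}\csc^2\frac{\pi j}{n}=\frac{n^2-1}{3}$ was used to squeeze $S(n)$ between two polynomials of matching degree. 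So the only thing the proof of the main theorem does is algebraic substitution and a limit.
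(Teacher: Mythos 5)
Your proposal is correct and follows exactly the paper's own argument: substitute the two-sided bound of \lemref{lem sum inequality} into the exact formula of \thmref{thm honeycomb}, simplify, and take the limit. The algebraic expansions ($9(n-1)^2=9n^2-18n+9$ and $9(n+1)(n-1)^2=9n^3-9n^2-9n+9$) check out, so nothing is missing.
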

\begin{proof}
The proof of the first part in the theorem follows from \thmref{thm honeycomb} and \lemref{lem sum inequality}.

Taking the limit of the each term in the first part, we see that both upper and lower bounds tend to $\frac{1}{108}$ as
$n \rightarrow \infty$. Hence, $\tau(\mathrm{H}^N(n,n)) \rightarrow \frac{1}{108}$ as $n \rightarrow \infty$.

\end{proof}

\thmref{thm main} shows that tau constants of the metrized graphs $\mathrm{H}^N(n,n)$ asymptotically approach to the conjectural lower bound $\frac{1}{108}$. Recall that if the conjectural lower bound $\frac{1}{108}$ is correct, there is no metrized graph $\beta$ with $\tau(\beta)=\frac{1}{108}$ \cite[Theorem 4.8]{C2}.

Note that the lower bound to $\tau(\mathrm{H}^N(n,n))$ given in \thmref{thm main} is better than the one given in \eqnref{eqn tau bounds}.

Now, we give approximate values of our formulas for large values of $n$. First, we observe that (see \cite[page 648]{Y})
\begin{equation}\label{eqn limit int}
\begin{split}
\lim_{n, \, m \rightarrow \infty} \frac{\tr(\plm)}{(n+1)(m+1)} &= \frac{1}{4 \pi^2} \int_{0}^{2\pi} \int_{0}^{2\pi} \frac{3}{3-\cos{x}-\cos{y}-\cos{(x+y)}} dx dy
\\ & \approx 5.4661,
\end{split}
\end{equation}
where $\plm$ is as in \eqnref{eqn trace L pseudo}. If we rewrite this equality for the pseudo inverse of $\mathrm{L}(\mathrm{H}^N(n-1,n-1))$
and use it in \eqnref{eqn tau formula of HT}, we derive the following approximation for large values of $n$:
$$
\tau(\mathrm{H}^N(n-1,n-1)) \approx \frac{1}{108}(1+\frac{1}{n^2})^2+\frac{1}{6n^2}5.4661.
$$

To have an exact formula for $\tau(\mathrm{H}^N(n-1,n-1))$, our work highlight the following questions:

What is the exact value of the following finite trigonometric sum in terms of $n$?
$$\sum_{i=1}^{n-1}\sum_{j=1}^{n-1} \frac{1}{3-\cos{\frac{2 \pi i}{n}}- \cos{\frac{2 \pi j}{n}}- \cos{\frac{2 \pi (i+j)}{n} }}.$$
Our computations using \cite{MMA} indicate that it has rational values for any given integer $n$. For example, when $2 \leq n \leq 10$ its values are as follows:

$\frac{1}{4}$, $\frac{10}{9}$, $\frac{11}{4}$, $\frac{58}{11}$, $\frac{1577}{180}$, $\frac{3812}{287}$, $\frac{529}{28}$, $\frac{419788}{16371}$, $\frac{813957}{24244}$.

F. Y. Wu \cite[Equations 2 and 11]{W} computed a similar integration (related to the sum of the eigenvalues of $\lm$ instead of $\plm$):
\begin{equation*}\label{eqn primes}
\begin{split}
\frac{1}{4 \pi^2} \int_{0}^{2\pi} \int_{0}^{2\pi} \ln{\big( 6-2 \cos{x}-2 \cos{y} -2 \cos{(x+y)} \big)} dx dy & = \frac{3 \sqrt{3}}{\pi}
(1-\frac{1}{5^2}+\frac{1}{7^2}-\frac{1}{11^2}+\frac{1}{13^2}\\
&\qquad - \frac{1}{17^2}+\frac{1}{19^2}- \cdots ).
\end{split}
\end{equation*}
Do we have a similar formula for the integral given in \eqnref{eqn limit int}?

We expect that answering these questions can help us to understand number theoretic applications of the tau constant.

We finish this section by giving lower an upper bounds to the Kirchhoff index of $\mathrm{H}(n,n)$.
\begin{theorem}\label{thm kirchhoff}
For every integer $n \geq 2$, we have
$$\frac{2n(n+1)^2(2n+3)}{3}+\frac{(n+1)^2}{3} \leq Kf(\mathrm{H}(n,n)) \leq \frac{n(n+1)^2(n+2)(n+3)}{3}+\frac{(n+1)^2}{3}.$$
\end{theorem}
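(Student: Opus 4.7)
The plan is to combine three ingredients already at hand: the identity relating the Kirchhoff index to $\tr(\plm)$, the explicit spectral formula \eqnref{eqn trace L pseudo2} for $\tr(\plm)$ of the hexagonal torus, and the trigonometric double-sum bounds in \lemref{lem sum inequality}.

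First, I would invoke the second equality in \eqnref{eqn res trace}, namely $\sum_{p,q \in V(\ga)} r(p,q) = 2v \cdot \tr(\plm)$. Since the Kirchhoff index is $Kf(\ga) = \frac{1}{2}\sum_{p,q \in V(\ga)} r(p,q)$, this becomes $Kf(\ga) = v \cdot \tr(\plm)$. For $\ga = \mathrm{H}(n,n)$ the vertex count is $v = 2(n+1)^2$, so
\[
Kf(\mathrm{H}(n,n)) = 2(n+1)^2 \cdot \tr(\plm).
\]

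Next, I would substitute the closed form given by \eqnref{eqn trace L pseudo2},
\[
\tr(\plm) = \frac{1}{6} + \frac{n(n+2)}{2} + 3\, S_n,
\]
where $S_n$ denotes the inner double sum over $1 \le i,j \le n$. The key step is to apply \lemref{lem sum inequality} with the substitution $n \mapsto n+1$, which gives
\[
\frac{n^2}{6} \;\le\; S_n \;\le\; \frac{(n+2)\,n^2}{6}.
\]
Plugging these into the expression for $\tr(\plm)$ and multiplying by $2(n+1)^2$ yields lower and upper bounds on $Kf(\mathrm{H}(n,n))$ of the form
\[
2(n+1)^2 \left[\tfrac{1}{6} + n(n+1)\right] \;\le\; Kf(\mathrm{H}(n,n)) \;\le\; 2(n+1)^2 \left[\tfrac{1}{6} + \tfrac{n(n+1)(n+2)}{2}\right].
\]

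The remainder is purely algebraic manipulation: factor out $\tfrac{(n+1)^2}{3}$ from both sides and collect the polynomial coefficients in $n$ to present the bounds in the packaged form stated in the theorem. I do not anticipate a conceptual obstacle here, since every step is a direct substitution of a previously established identity. The only bookkeeping care needed is in combining $\tfrac{n(n+2)}{2} + \tfrac{n^2}{2} = n(n+1)$ for the lower bound (and the analogous consolidation for the upper bound), and then verifying that the resulting polynomials factor into the exact closed forms appearing in the statement.
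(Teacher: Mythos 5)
Your setup matches the paper's proof exactly: $Kf(\mathrm{H}(n,n)) = 2(n+1)^2\tr(\plm)$ via \eqnref{eqn res trace}, then \eqnref{eqn trace L pseudo2} together with \lemref{lem sum inequality} applied with $n\mapsto n+1$. Your intermediate bounds $\frac{n^2}{6}\le S_n\le\frac{(n+2)n^2}{6}$ and the resulting $\tfrac16+n(n+1)\le\tr(\plm)\le\tfrac16+\tfrac{n(n+1)(n+2)}{2}$ are correct. The gap is in the last step, which you dismiss as ``purely algebraic manipulation'': it is not. For the lower bound you are fine, but only because your bound is strictly stronger than the stated one: $2n(n+1)^3\ge\frac{2n(n+1)^2(2n+3)}{3}$ holds because $3(n+1)\ge 2n+3$; this is an inequality, not a rearrangement, and needs to be said. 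For the upper bound the comparison goes the wrong way: your bound has main term $2(n+1)^2\cdot\frac{n(n+1)(n+2)}{2}=n(n+1)^3(n+2)$, whereas the theorem claims $\frac{n(n+1)^2(n+2)(n+3)}{3}$, and $n(n+1)^3(n+2)>\frac{n(n+1)^2(n+2)(n+3)}{3}$ for every $n\ge1$ since $3(n+1)>n+3$. What you have proved is therefore strictly weaker than the statement, and no collecting of coefficients closes a discrepancy that is asymptotically a factor of $3$.

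Moreover the obstruction is not merely in your write-up: the stated upper bound is false for small $n$. For $n=2$, \eqnref{eqn trace L pseudo2} gives $\tr(\plm)=\frac16+4+3\cdot\frac{10}{9}=\frac{45}{6}$, so $Kf(\mathrm{H}(2,2))=18\cdot\frac{45}{6}=135$, while the claimed upper bound evaluates to $\frac{2\cdot 9\cdot 4\cdot 5}{3}+3=123$. (The paper's own proof asserts $\tr(\plm)\le\frac{n(n+2)(n+3)}{6}+\frac16$ ``by \eqnref{eqn trace L pseudo2} and \lemref{lem sum inequality}'', which suffers from the same defect; the correct consequence of those two results is exactly your $\tr(\plm)\le\frac{n(n+1)(n+2)}{2}+\frac16$.) The honest conclusion of your argument is the pair of bounds you actually derived, namely $\frac{(n+1)^2}{3}+2n(n+1)^3\le Kf(\mathrm{H}(n,n))\le\frac{(n+1)^2}{3}+n(n+1)^3(n+2)$, not the inequality as printed.
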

\begin{proof}
Let $\plm$ be the pseudo inverse of the discrete Laplacian of $\mathrm{H}(n,n)$, and let $r(x,y)$ be the resistance function on $\mathrm{H}(n,n)$. By definition, $Kf(\mathrm{H}(n,n))=\frac{1}{2}\sum_{p, \, q, \in \vv{\mathrm{H}(n,n)}}r(p,q)$. Now, one obtains $\sum_{p, \, q, \in \vv{\mathrm{H}(n,n)}}r(p,q)=4 (n+1)^2 \tr(\plm)$ by the second equality in \eqnref{eqn res trace} and the fact that $\mathrm{H}(n,n)$ has $2(n+1)^2$ vertices.

On the other hand, we use \eqnref{eqn trace L pseudo2} and \lemref{lem sum inequality} to derive
$$ \frac{n(2n+3)}{3}+\frac{1}{6} \leq  \tr(\plm) \leq \frac{n(n+2)(n+3)}{6}+\frac{1}{6}.$$
Hence, the result follows by combining our findings.
\end{proof}


\section{Metrized Graphs With Small Tau Constants}\label{sec numerical examples}
\vskip .1 in

In this section, we first give numerical evaluations of $\tau(\mathrm{H}^N(n,m))$ for several values of $n$ and $m$.
Then we construct two other families of normalized metrized graphs. Our numerical computations suggest that the tau constants of the graphs in these families approach to $\frac{1}{108}$. Although, we don't have any theoretical proofs for these new families, the computations show that honeycomb graphs $\mathrm{H}^N(n,n)$ are not the only families of metrized graphs with small tau constants.

\textbf{Example I:} Honeycomb graphs $\mathrm{H}^N(n,m)$ with large $n$ and $m$ values, not necessarily $n=m$, have small tau constants.
This can be seen in Table \ref{table honeycomb tau}.
\begin{table}
\begin{center}
\bgroup
\def\arraystretch{1.3}
\begin{tabular}{|c|c|c|c|c|c|}
 \hline
$n  \backslash m $  & $5$ & $50$ & $100$ & $150$ & $165$ \\
 \hline
$5$  & $\frac{1}{57.21661}$ & $\frac{1}{86.28266}$ & $\frac{1}{88.80202}$ & $\frac{1}{89.67482}$ & $\frac{1}{89.83536}$ \\
 \hline
$50$  & $\frac{1}{86.28266}$ & $\frac{1}{86.28266}$ & $\frac{1}{106.93826}$ & $\frac{1}{107.22594}$ & $\frac{1}{107.27841}$ \\
 \hline
$100$  & $\frac{1}{88.80202}$ & $\frac{1}{106.93826}$ & $\frac{1}{107.44199}$ & $\frac{1}{107.61066}$ & $\frac{1}{107.64154}$\\
 \hline
$150$  & $\frac{1}{89.67482}$ & $\frac{1}{107.22594}$ & $\frac{1}{107.61066}$ & $\frac{1}{107.73206}$ & $\frac{1}{107.75424}$\\
 \hline
$165$  & $\frac{1}{89.83536}$ & $\frac{1}{107.27841}$ & $\frac{1}{107.64154}$ & $\frac{1}{107.75424}$ & $\frac{1}{107.77473}$ \\
 \hline
\end{tabular}
\egroup
\end{center}\caption{The tau constants for $\mathrm{H}^N(n-1,m-1)$ for $n$, $m$ $\in \{ 5, \, 50, \, 100, \, 150, \, 165 \}$. $\mathrm{H}^N(n-1,m-1)$ has $2 n m$ vertices. In particular, $\mathrm{H}^N(164, 164)$ has $54450$ vertices.} \label{table honeycomb tau}
\end{table}

\textbf{Example II:} Let $a$ and $b$ be two integers that are bigger than $2$. We construct a family of normalized cubic metrized graphs $\mathrm{MM}(a,b)$  with equal edge lengths and having $4ab$ vertices as follows:

First, we take two copies of circle graph $C_{a b}$ having $ab$ vertices. We label them as $A$ and $B$. Secondly, we take $a$ copies of circle graph $C_{2b}$ with $2b$ vertices. Finally, we set up edges between the circle graphs with $ab$ vertices and the circle graphs with $2b$ vertices. For each $i \in \{ 1, \, 2, \, \cdots, \, a \}$, we group the vertices of $C_i$ into sets $A_i$ and $B_i$ in an alternating way.
Then, we connect a vertex in $A_i$ to a vertex in $A$ by adding an edge. Similarly, we connect a vertex in $B_i$ to a vertex in $B$ by adding an edge. If a vertex $j \in A_i$ is connected to a vertex $k \in A$, then we add an edge between $j+2 \in A_i$ and $a+k \in A$. Similarly, if a vertex $j \in B_i$ is connected to a vertex $k \in B$, then we add an edge between $j+2 \in B_i$ and $a+k \in B$.
More precise description can be given as below:

The circle graph with label $A$ has vertices $v_1$, $v_2$, $\cdots$, $v_{ab}$ (and so the edges with the pair of end points $(v_1,v_2), \, (v_2,v_3), \, \cdots, \, (v_{ab-1},v_{ab}), \, (v_{ab},v_1)$).

The circle graph with label $B$ has vertices $v_{ab+1}$, $v_{ab+2}$, $\cdots$, $v_{2ab}$.

For each $k \in \{ 1, \, 2, \, \cdots, \, a \}$, the circle graph $C_k$ has vertices $v_{2ab+2b(k-1)+1}$, $v_{2ab+2b(k-1)+2}$, $\cdots$, $v_{2ab+2b(k-1)+2b}=v_{2b(a+k)}$.

The edges other than the ones on the circle graphs are given as follows:

For each $k \in \{ 1, \, 2, \, \cdots, \, a \}$ and for each $j \in \{ 1, \, 2, \, \cdots, \, b \}$, we have two edges with the pair of end points $(v_{2ab+2b(k-1)+2j-1},v_{a(j-1)+k})$ and $(v_{2ab+2b(k-1)+2j},v_{ab+a(j-1)+k})$, respectively. Here, the first edge connects $C_k$ and $A$, and the other edge connects $C_k$ and $B$. \figref{fig MM4and2} illustrates $MM(4,2)$.
\begin{figure}
\centering
\includegraphics[scale=0.7]{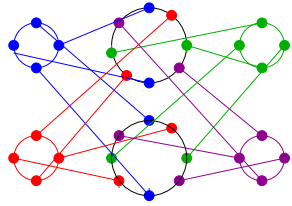} \caption{Construction of the graph MM(4,2).} \label{fig MM4and2}
\end{figure}

Because of the symmetries of $MM(a,b)$ and the fact that it is a cubic graph with equal edge lengths, we expect that its tau constant will be small.

\begin{table}
\begin{center}
\bgroup
\def\arraystretch{1.3}
\begin{tabular}{|c|c|c|c|c|c|}
 \hline
$a  \backslash b $  & $5$ & $50$ & $100$ & $110$ & $116$ \\
 \hline
 $5$ & $\frac{1}{72.89444}$ & $\frac{1}{94.18968}$ & $\frac{1}{95.74330}$ & $\frac{1}{95.88708}$ & $\frac{1}{95.96162}$ \\
 \hline
 $50$ & $\frac{1}{100.30286}$ & $\frac{1}{107.12515}$ & $\frac{1}{107.46364}$ & $\frac{1}{107.49452}$ & $\frac{1}{107.51050}$ \\
 \hline
 $100$ & $\frac{1}{102.43605}$ & $\frac{1}{107.51720}$ & $\frac{1}{107.70897}$ & $\frac{1}{107.72642}$ & $\frac{1}{107.73545}$ \\
 \hline
 $110$ & $\frac{1}{102.63448}$ & $\frac{1}{107.55209}$ & $\frac{1}{107.72935}$ & $\frac{1}{107.74546}$ & $\frac{1}{107.75379}$ \\
 \hline
  $116$ & $\frac{1}{102.73742}$ & $\frac{1}{107.57013}$ & $\frac{1}{107.73979}$ & $\frac{1}{1/107.75520}$ &  \\
 \hline
\end{tabular}
\egroup
\end{center} \caption{The tau constants for several normalized metrized graphs $MM(a,b)$ which has $4ab$ vertices. For example, $MM(116,110)$ has $51040$ vertices.} \label{table MM tau}
\end{table}

\figref{fig MMgraphs} illustrates the graphs of $MM(a,b)$ for several values of $a$ and $b$.
%
We used Mathematica \cite{MMA} to draw these graphs.


\begin{figure}
\centering
\includegraphics[scale=0.85]{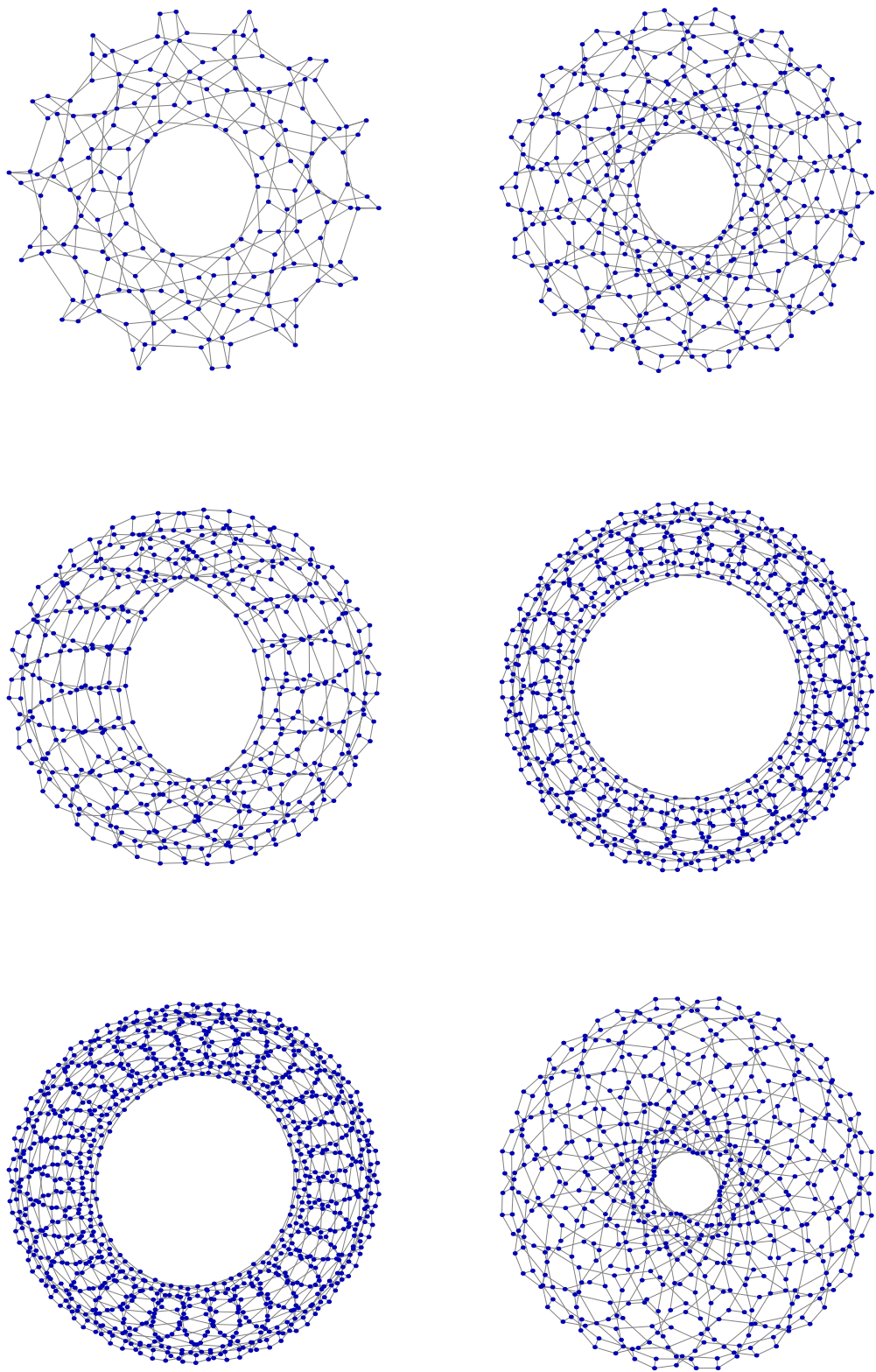} \caption{Graphs of $MM(a,b)$, where $(a,b) \in \{ (8,6), \, (12,10), \, (16,16)  \}$ on the first column and $(a,b) \in \{ (12,8), \, (12,14), \, (16,9)  \}$ on the second column.} \label{fig MMgraphs}
\end{figure}

\textbf{Example III:}
Let $a$, $b$ and $c$ be positive integers such that $b$ and $c$ are of different parity. Then we construct
a cubic normalized metrized graph $TT(a, b, c)$ with equal edge lengths as follows:

We first construct a $3$-Cayley tree containing $3 \cdot 2^a -2$ vertices. In such a graph, the number of outer vertices is $3 \cdot 2^{a-1}$ and the number of inner vertices is $k=3.2^{a-1}-2$, where the inner vertices are the ones with valence $3$. \figref{fig Cayleygraphs} shows such graphs with small $a$ values. Secondly, we add edges connecting the outer vertices.
Suppose that we label the outer vertices as $\{v_{k+1}, \, v_{k+2}, \, \cdots, \, v_{k+3 \cdot 2^{a-1}-1}, \, v_{k+3 \cdot 2^{a-1}}=v_{3 \cdot 2^a -2} \}$. Then for each odd integer $i$ with $1 \leq i \leq 3 \cdot 2^{a-1}$, we add an edge with the pair of end points
$(v_{k+i},v_{k+i+b})$, where $i+b$ is considered in $\mod$ $3 \cdot 2^{a-1}$. Similarly, for each even integer $j$ with $1 \leq j \leq 3 \cdot 2^{a-1}$, we add an edge with the pair of end points $(v_{k+j},v_{k+j+c-1})$, where $j+c-1$ is again considered in $\mod$ $3 \cdot 2^{a-1}$.
%

\begin{figure}
\centering
\includegraphics[scale=0.9]{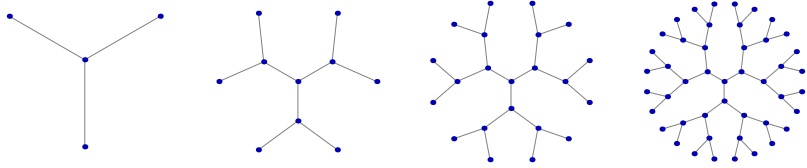} \caption{Graphs of 3-Cayley graphs when $a$ is $1$, $2$, $3$ and $4$ in order.} \label{fig Cayleygraphs}
\end{figure}

\begin{table}
\begin{center}
\bgroup
\def\arraystretch{1.3}
\begin{tabular}{|c|c|c|c|c|c|c|}
 \hline
$b  \backslash c $  & $514$ & $258$ & $130$ & $66$ & $34$ & $18$ \\
 \hline
 $2049$ & $\frac{1}{107.49402}$ & $\frac{1}{107.59561}$ & $\frac{1}{107.61874}$ & $\frac{1}{107.62882}$ & $\frac{1}{107.63435}$ & $\frac{1}{107.60193}$  \\
 \hline
 $1025$ & $\frac{1}{107.44445}$ & $\frac{1}{107.60114}$ & $\frac{1}{107.63523}$ & $\frac{1}{107.64677}$ & $\frac{1}{107.66068}$ & $\frac{1}{107.66957}$ \\
 \hline
 $513$ & $\frac{1}{106.99123}$ & $\frac{1}{107.52468}$ & $\frac{1}{107.62509}$ & $\frac{1}{107.64779}$ & $\frac{1}{107.66312}$ & $\frac{1}{107.68059}$ \\
 \hline
 $257$ & $\frac{1}{107.42122}$ & $\frac{1}{107.06822}$ & $\frac{1}{107.54748}$ & $\frac{1}{107.63829}$ & $\frac{1}{107.66162}$ & $\frac{1}{107.68262}$ \\
 \hline
  $129$ & $\frac{1}{107.59886}$ & $\frac{1}{107.44635}$ & $\frac{1}{107.10759}$ & $\frac{1}{107.56565}$ & $\frac{1}{107.63960}$ & $\frac{1}{107.65636}$  \\
 \hline
\end{tabular}
\egroup
\end{center} \caption{The tau constants for normalized metrized graphs $TT(13,b,c)$, each of which has $24574$ vertices.} \label{table TT tau}
\end{table}

\begin{table}
\begin{center}
\bgroup
\def\arraystretch{1.3}
\begin{tabular}{|c|c|c|c|c|}
 \hline
 $b  \backslash c $  & $130$ & $66$ & $34$ & $18$ \\
 \hline
 $1025$ & $\frac{1}{107.75856}$ & $\frac{1}{107.76736}$ & $\frac{1}{107.78212}$ & $\frac{1}{107.79897}$\\
 \hline
 $513$ & $\frac{1}{107.74639}$ & $\frac{1}{107.76764}$ & $\frac{1}{107.78342}$ & $\frac{1}{107.80269}$\\
 \hline
 $257$ & $\frac{1}{107.67083}$ & $\frac{1}{107.75703}$ & $\frac{1}{107.77943}$ & $\frac{1}{107.80147}$\\
 \hline
 $129$ & $\frac{1}{107.23574}$ & $\frac{1}{107.68350}$ & $\frac{1}{107.75311}$ & $\frac{1}{107.76898}$\\
 \hline
\end{tabular}
\egroup
\end{center} \caption{The tau constants for normalized metrized graphs $TT(14,b,c)$, each of which has $49150$ vertices.} \label{table TT tau2}
\end{table}


Comparing the tau values in each Tables, we note that these are the smallest tau values:

$\tau(\mathrm{H}^N(164,164))=\frac{1}{107.77473}$ and $\mathrm{H}^N(164,164)$ has $54450$ vertices, 
$\tau(MM(116,110))=\frac{1}{107.75520}$ and $MM(116,110)$ has $51040$ vertices,
$\tau(TT(13,257,18))=\frac{1}{107.68262}$ and $TT(13,257,18)$ has $24574$ vertices,
$\tau(TT(14,513,18))=\frac{1}{107.80269}$ and $TT(14,513,18)$ has $49150$ vertices.

Note that we have various graphs in the family $TT(14,b,c)$ having girth bigger than the graphs in the other families considered in this section. Thus, as suggested by \remref{rem girth}, we can expect to have metrized graphs in this family whose tau constants approaches to $\frac{1}{108}$ faster than the other ones.

Computations listed in Tables (\ref{table honeycomb tau}), (\ref{table MM tau}) and (\ref{table TT tau}) are done by using Matlab \cite{Mat}. Figures (\ref{fig Hnm}), (\ref{fig MMgraphs}), (\ref{fig Cayleygraphs}) are drawn in Mathematica \cite{MMA}.

We were able to compute the tau constant of metrized graphs having vertices more than $50,000$ and edges more than $75,000$. 
As such computations would be possible on a computer with high memory and processing speed, we used Mac Pro with processor $2 \times 2.93$ GHz $6$-core Intel Xeon ($24$ hyper-threading in total) and memory $24$ GB $1333$ MHz DDR3 to obtain these results.



\textbf{Acknowledgements:} This work is supported by The Scientific and Technological Research Council of Turkey-TUBITAK (Project No: 110T686).

\end{document}